\newcommand{\Z}{{\mathbb Z}}
\newcommand{\N}{{\mathbb N}}
\newtheorem{theorem}{Theorem}
\newtheorem{lemma}[theorem]{Lemma}
\newtheorem{corollary}[theorem]{Corollary}
\newtheorem{claim}[theorem]{Claim}
\newtheorem*{definition*}{Definition}
\numberwithin{equation}{section}
\numberwithin{theorem}{section}
\subjclass[2010]{\ 05D10, 11B30, 11T06, 11A07}
\keywords{\ Ramsey theory, Arithmetic combinatorics}
\title[Polynomial Schur's theorem]%
  {Polynomial Schur's theorem}
\author{Hong Liu}
\email{hongliu@ibs.re.kr}
\address{Extremal Combinatorics and Probability Group (ECOPRO), Institute for Basic Science (IBS), Daejeon, South Korea}
\author{P\'eter P\'al Pach}
\email{ppp@cs.bme.hu}
\address{MTA-BME Lend\"ulet Arithmetic Combinatorics Research Group, Department of Computer Science and Information Theory, Budapest
  University of Technology and Economics, 1117 Budapest, Magyar tud\'osok
  k\"or\'utja 2., Hungary and Department of Computer Science and DIMAP, University of Warwick, Coventry CV4 7AL, UK}
\author{Csaba S\'andor}
\email{csandor@math.bme.hu}
\address{Institute of Mathematics, Budapest University of Technology and Economics, H-1529 B.O.
Box, Hungary and MTA-BME Lend\"ulet Arithmetic Combinatorics Research Group}
\thanks{H.L.\ was supported by the Institute for Basic Science (IBS-R029-C4), the Leverhulme Trust Early Career Fellowship~ECF-2016-523 and the UK Research and Innovation Future Leaders Fellowship MR/S016325/1.\\
P. P. P. was partially supported by the National Research, Development and Innovation Office NKFIH
(Grant Nr.~PD115978) and the J\'anos Bolyai Research
Scholarship of the Hungarian Academy of Sciences. The author has also received funding from the European Research Council (ERC) under the European Union’s Horizon 2020 research and innovation programme (grant agreement No 648509). This publication reflects only its author's view; the European Research Council Executive Agency is not responsible for any use that may be made of the information it contains.\\ 
C.S. was supported by the OTKA
Grant No. K109789 and the J\'anos Bolyai Research Scholarship of the Hungarian Academy of Sciences.}
\begin{document}

\begin{abstract}
We resolve the Ramsey problem for $\{x,y,z:x+y=p(z)\}$ for all polynomials $p$ over $\Z$. In particular, we characterise all polynomials that are $2$-Ramsey, that is, those $p(z)$ such that any $2$-colouring of $\N$ contains infinitely many monochromatic solutions for
$x+y=p(z)$. For polynomials that are not $2$-Ramsey, we characterise all $2$-colourings of $\N$ that are not $2$-Ramsey, revealing that certain divisibility barrier is the only obstruction to $2$-Ramseyness for $x+y=p(z)$. 
\end{abstract}

\maketitle

\section{Introduction}
The study of Ramsey theory searches for monochromatic patterns in finite colourings of $\N$. A pattern is \emph{$k$-Ramsey}, $k\in\N$, if it appears \emph{infinitely} often in any $k$-colouring of $\N$; and \emph{Ramsey} if this holds for every $k\in\N$. Ramsey theory has a long history dating back to the famous theorem of Schur~\cite{Sch} in 1916, which states that the equation $x+y=z$ is Ramsey, that is, any finite colouring of $\N$ contains infinitely many monochromatic solutions to $x+y=z$. Another classical example is van der Waerden’s theorem~\cite{vdW} stating that $\{x, x+y,\ldots, x+(\ell-1)y\}$ is Ramsey for any $\ell\in\N$. Rado~\cite{Rad33} later in his seminal work resolved the Ramsey problem for all \emph{linear} equations, characterising all those that are Ramsey. Since then, many extensions have been studied, see e.g.~the far-reaching polynomial extension of van der Waerden’s theorem by Bergelson and Leibman~\cite{BL96}.

In this paper, we study the polynomial extension of Schur's theorem. Somewhat surprisingly, only a special case of this natural problem has been solved. Csikv\'ari, Gyarmati and S\'ark\"ozy~\cite{CGS} showed that $x+y=z^2$ is \emph{not} $16$-Ramsey, that is, they constructed a $16$-colouring of $\N$ with no monochromatic solution for $x+y=z^2$ other than the trivial solution $x=y=z=2$. Later, Green and Lindqvist~\cite{GL} completely resolved this case using Fourier-analytic arguments, giving the satisfying answer that any $2$-colouring of $\N$ contains \emph{infinitely} many monochromatic solutions, while $3$ colours suffice to avoid non-trivial monochromatic solutions. In other words, $x+y=z^2$ is $2$-Ramsey, but not $3$-Ramsey. In fact, the $3$-colouring in~\cite{GL} can be easily adapted to show that 
\begin{itemize}
	\item[] \emph{$x+y=p(z)$ is not $3$-Ramsey for any $p(z)\in\Z[z]$ with $\deg(p)\ge 2$.}
\end{itemize} 
The result in~\cite{GL} also implies that there are at least $\log\log N$ monochromatic solutions in $[N]:=\{1,\ldots,N\}$ for any sufficiently large $N$. On the other hand, there is a greedy 2-colouring with at most $N^{1/2}$ monochromatic solutions. Recently, the second author~\cite{Pach} gave a shorter combinatorial proof for the 2-Ramseyness of $x+y=z^2$.




What can we say about a generic polynomial? A priori, it is not even clear, for example, whether other degree-2 polynomials are also $2$-Ramsey. After a moment (or two!) of thoughts, it is not hard to realise that this is certainly \emph{not} the case for \emph{all} quadratic polynomials due to a parity obstruction, as witnessed by the following example. 

\medskip 

\noindent\emph{Example 1.} Consider 
$$p(z)=2z^2+1.$$ 
Note that $p(z)$ takes only odd values, to have $x+y=2z^2+1$, it must be that $x$ and $y$ have different parities. As a result, one can 2-colour numbers in $\N$ according to their parities to avoid any monochromatic triple $\{x,y,z:x+y=2z^2+1\}$.

\subsection{Main results}
We completely resolve the Ramsey problem for $$\{x,y,z:x+y=p(z)\}$$ 
for all polynomials $p$ over $\Z$, thereby establishing a polynomial extension of Schur's theorem. In particular, we characterise all polynomials that are $2$-Ramsey. 

In fact, our results are stronger. For polynomials that are $2$-Ramsey, we prove a quantitative result, giving a lower bound on the number of monochromatic solutions. 

\begin{theorem}\label{thm-numberofsol}
	Let $p(z)=a_dz^d+\dots+a_1z+a_0\in\mathbb{Z}[z]$ with $d\ge 1$ and $a_d>0$ such that $2\mid p(1)p(2)$.
	Let $\phi$ be a 2-colouring of $[n]$. Then the number of monochromatic solutions $\{x,y,z\}\in [n]^{(3)}$ to $x+y=p(z)$ is at least $n^{{2}/{d^3}-o(1)}$. Moreover, there is a 2-colouring for which the number of monochromatic solutions is only $O(n^{2/d^2})$.
\end{theorem}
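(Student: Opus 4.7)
The plan is to prove the lower bound by an iterative extraction argument and the upper bound by an explicit interval colouring; the lower bound is the main content. Throughout, let $\phi$ have colour classes $A, B$ with $|A| \ge n/2$. The hypothesis $2 \mid p(1)p(2)$ is used to exclude the parity obstruction of Example~1: since $p(z) \bmod 2$ depends only on $z \bmod 2$, the assumption forces $p$ to be even on at least one residue class mod $2$, so that in at least one colour class there are $\gtrsim n^{1/d}$ values $z \in [n^{1/d}]$ with $p(z)$ even.

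The target exponent $2/d^3$ is suggestive of a three-fold iteration of $z \mapsto p(z)$. My plan is to construct a chain of monochromatic sets $U_0, U_1, U_2, U_3$ with $U_i \subseteq [(n/a_d)^{1/d^i}]$, all in a fixed common colour, satisfying $|U_i| \gtrsim (n/a_d)^{1/d^i}$ (positive density in the ambient interval) and $p(z) \in U_{i-1}$ for every $z \in U_i$. Each step reduces to pigeonhole on the preimage $\{z \in [(n/a_d)^{1/d^i}] : p(z) \in U_{i-1}\}$: since $p$ is essentially a bijection $[(n/a_d)^{1/d^i}] \to [(n/a_d)^{1/d^{i-1}}]$ onto the ambient interval of $U_{i-1}$, and $U_{i-1}$ has positive density there, the preimage has positive density, from which a further pigeonhole extracts the monochromatic subset $U_i$. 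A preliminary pigeonhole over the four possible colour choices (one per interval) ensures that a single common colour can be maintained.

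With $U_3$ in hand of size $\gtrsim n^{1/d^3 - o(1)}$, monochromatic solutions are counted by considering pairs $(z, x) \in U_3 \times U_2$: setting $y = p(z) - x$, we obtain a candidate triple $\{x, y, z\}$ with $x + y = p(z)$, $z \in U_3 \subseteq A$ and $x \in U_2 \subseteq A$. A second-moment / Cauchy--Schwarz estimate on $\sum_{z \in U_3,\, x \in U_2} \mathbf{1}[y \in A]$, exploiting the density $|A| \ge n/2$ and the parity hypothesis to ensure that $A + A$ densely covers $p(U_3)$, yields $\gtrsim |U_3|^2 = n^{2/d^3 - o(1)}$ valid monochromatic triples after discarding the $O(|U_3|)$ degenerate cases with $x \in \{y, z\}$. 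The main obstacle is executing the iteration while preserving monochromaticity across the shrinking intervals and guaranteeing $y \in A$ for enough pairs; the parity hypothesis is indispensable for the latter, as the extreme case where $A$ is a single parity class already demonstrates that without it no such lower bound can hold.

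For the upper bound $O(n^{2/d^2})$, I would use the explicit colouring that partitions $[n]$ into intervals $I_k := [C^{d^k}, C^{d^{k+1}})$ for a sufficiently large constant $C = C(p)$, colouring $I_k$ by the parity of $k$. If $\{x,y,z\}$ is a mono triple with $x+y=p(z)$ and $z \in I_k$, then $p(z) \sim a_d z^d$ lies in $I_{k+1}$, forcing $\max(x,y) \in I_{k+1}$, which has the opposite colour from $z$; so mono triples only arise near interval boundaries, where $p(z)$ barely crosses the endpoint of $I_{k+1}$. A direct count of these boundary triples, summed over the $O(\log_d \log_C n)$ levels, yields the claimed bound.
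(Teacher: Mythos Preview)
Your lower-bound plan has a genuine gap at its core. The ``preliminary pigeonhole over the four possible colour choices'' does not work: the colour of $U_i$ is not a free parameter but is forced by the majority colour of $p^{-1}(U_{i-1})$ inside $[n^{1/d^i}]$. Consider the colouring that assigns colour $(-1)^i$ to the shell $(n^{1/d^{i+1}}, n^{1/d^i}]$. Then the dense part of $[n^{1/d^i}]$ has colour $(-1)^i$, so the $U_i$ must alternate colours; you cannot get $U_2$ and $U_3$ both of the same colour with $|U_2|\gg n^{1/d^2}$ and $|U_3|\gg n^{1/d^3}$. Trying the minority colour at some step shrinks the set by a factor $n^{1/d^{i+1}-1/d^i}$, destroying the density you need.

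Even if one grants $U_2,U_3$ monochromatic in a common colour $c$, your final step is precisely the hard part of the theorem and is not justified. You need $y=p(z)-x$ to have colour $c$ for many pairs $(z,x)$, but the global density bound $|A|\ge n/2$ says nothing about $A\cap[n^{1/d^2}]$, where $y$ lives. The assertion that ``$A+A$ densely covers $p(U_3)$'' is exactly what must be proved; the parity hypothesis alone does not give it (e.g.\ take $c=$ odd and $p$ even only on even inputs: then $p(U_3)\subseteq 2\Z+1$ while $U_2+U_2\subseteq 2\Z$). The paper's proof is structurally different: it locates a switch $k\in(n^{1/d^2},n^{1/d})$, uses the monotonicity Lemma~\ref{lem-monotone} to force structure on residue classes modulo $m(k)=p(k+1)-p(k)$, then applies Kneser's theorem (Claim~\ref{cl-goodclasses}) to show the sumset $A_k+A_k$ misses at most $m(k)^{1-o(1)}$ residues, and finally invokes a Hensel-type result (Lemma~\ref{henseltype}) to find many $z$ with $p(z)$ in good classes. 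The parity hypothesis enters only inside the Kneser argument. None of this machinery appears in your proposal, and I do not see how to avoid it.

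Your upper-bound idea is morally right but over-engineered; the paper uses just three intervals $[1,n_1)\cup[n_2,n]$ and $[n_1,n_2)$ with $n_1\asymp n^{1/d^2}$, $n_2\asymp n^{1/d}$, and counts directly: every monochromatic triple has $z<n_1$ and $\min(x,y)<n_1$, giving at most $O(n_1^2)=O(n^{2/d^2})$.
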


Note that the condition $a_d>0$ is necessary as otherwise $p(z)$ would eventually take only negative values. The assumption $2\mid p(1)p(2)$ is also needed, since otherwise $p(z)\equiv p(1)p(2)\equiv 1\pmod{2}$ and one can 2-colour $\N$ by parities as in Example 1 to avoid monochromatic solutions. 

It remains an interesting question to close the gap between the lower bound $n^{{2}/{d^3}-o(1)}$ and the upper bound  $O(n^{2/d^2})$ on the number of monochromatic solutions.

On the other hand, for polynomials that are not $2$-Ramsey, we characterise all $2$-colourings of $\N$ that are not $2$-Ramsey, showing that all such \emph{bad} $2$-colourings have to be \emph{balanced} and \emph{periodic}. Moreover the sumset of each colour class must have a rigid structure. It further reveals that a \emph{divisibility} barrier, generalising the aforementioned parity obstruction, is the \emph{only} obstruction to $2$-Ramseyness for $x+y=p(z)$. 

\begin{theorem}\label{thm-char}
	Let $p(z)=a_dz^d+\dots+a_1z +a_0\in\mathbb{Z}[z]$, with $d\ge 1$ and $a_d>0$. Let  $\phi:\mathbb{N}\to\{-1,1\}$ be a 2-colouring such that $x+y=p(z)$ does not have infinitely many monochromatic solutions. Then there exist an even positive integer $m$ and a partition of $\mathbb{Z}_m$ into two classes $A$ and $B$, each of size $m/2$, such that 
	$$\phi(x)=-1\quad \mbox{if and only if}\quad  x\in A\pmod{m}.$$ 
	Furthermore, there exists an odd $\alpha\in\Z_m$ such that 
	$$A+A=B+B=\mathbb{Z}_m\setminus \{\alpha\},$$
	and for any $z\in\N$, we have
	$$p(z)\equiv \alpha \pmod{m}.$$ 
\end{theorem}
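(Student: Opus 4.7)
The plan is to extract a rigid periodic structure on $\phi$ from the hypothesis of only finitely many monochromatic solutions. First, if $2\mid p(1)p(2)$, then Theorem~\ref{thm-numberofsol} would force $n^{2/d^3-o(1)}$ monochromatic triples in $[n]$, contradicting the hypothesis; so we may restrict to $p$ taking only odd values. We may also assume $\phi$ uses both colours infinitely often, since otherwise $\phi$ would be eventually constant and any large $z$ would yield many monochromatic triples via the pairs $(x,p(z)-x)$. Fix $N_0$ so that no monochromatic triple $\{x,y,z\}\subseteq(N_0,\infty)^3$ with $x+y=p(z)$ exists.

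I would next generate translations that preserve the colour classes. For $z_0\in\phi^{-1}(-1)\cap(N_0,\infty)$ and $z_1\in\phi^{-1}(+1)\cap(N_0,\infty)$, and $a$ with $N_0<a<p(z_0)-N_0$ and $\phi(a)=-1$, the $z_0$-constraint forces $\phi(p(z_0)-a)=+1$; applying the $z_1$-constraint to $b:=p(z_0)-a$ then yields $\phi(a+t)=-1$ for $t:=p(z_1)-p(z_0)$. Since both colour classes are infinite, $z_0,z_1$ can be chosen to realise both signs of $t$, and B\'ezout combinations of such $t$'s provide a positive integer $m$ with $\phi^{-1}(-1)$ periodic of period $m$ on sufficiently large inputs. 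Take $m$ to be the minimal eventual period of $\phi$. Because $m\mid p(z_1)-p(z_0)$ for every $z_0,z_1>N_0$, the residue $\alpha:=p(z)\bmod m$ is independent of $z>N_0$.

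Now I would work inside $\mathbb{Z}_m$ with $A,B\subseteq\mathbb{Z}_m$ the induced colour classes. The mono-avoidance, reinterpreted, gives $\alpha-A\subseteq B$ and $\alpha-B\subseteq A$, so the involution $\sigma(x):=\alpha-x$ swaps $A$ and $B$. This forces $|A|=|B|=m/2$ (so $m$ is even), and $\alpha$ must be odd, for otherwise $\sigma$ would fix $\alpha/2\in\mathbb{Z}_m$, giving $\phi(\alpha/2)=-\phi(\alpha/2)$. Trivially $\alpha\notin A+A$. To prove $A+A\supseteq\mathbb{Z}_m\setminus\{\alpha\}$, suppose some $\beta\ne\alpha$ is missing; then $(\beta-A)\cap A=\emptyset$, so $\beta-A\subseteq B$, and cardinality upgrades this to $\beta-A=B$. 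Combined with $\alpha-A=B$, this gives $A=A+(\beta-\alpha)$; since $\beta-\alpha\ne 0$, the set $A$ is a union of cosets of the proper subgroup $\langle\beta-\alpha\rangle\subseteq\mathbb{Z}_m$, producing a period of $\phi$ smaller than $m$ and contradicting minimality. Hence $A+A=\mathbb{Z}_m\setminus\{\alpha\}$, and $B+B=2\alpha-(A+A)=\mathbb{Z}_m\setminus\{\alpha\}$ follows from $B=\sigma(A)$.

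It remains to upgrade eventual periodicity to periodicity from $x=1$ and to verify $p(z)\equiv\alpha\pmod{m}$ for all $z$. Given any $x\in\N$, pick $z>N_0$ with $\phi(z)=\phi(x)$; only finitely many such $z$ can belong to a monochromatic triple through $x$, so for all the rest $\phi(p(z)-x)=-\phi(x)$. Since $p(z)-x$ lies in the periodic range with $(p(z)-x)\bmod m=\alpha-x$, the $\mathbb{Z}_m$-antisymmetry yields $\phi(x)=\phi(x\bmod m)$. And because $p(z)\bmod m$ depends only on $z\bmod m$, the identity $p(z)\equiv\alpha\pmod{m}$ for all $z>N_0$ (covering every residue class) automatically extends to every $z\in\N$. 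The main obstacle I expect is the second step: pinning down the correct minimal period $m$ from the one-sided shift constraints and doing the B\'ezout bookkeeping across both signs of $t$ carefully; once that is in hand, the minimality of $m$ used in the third step to upgrade $A+A\subseteq\mathbb{Z}_m\setminus\{\alpha\}$ to equality is short.
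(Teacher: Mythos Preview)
Your approach differs genuinely from the paper's. The paper analyses the ``monotone'' structure at each switch $k$ (Lemma~\ref{lem-monotone}), applies a Cauchy--Davenport stability result (Lemma~\ref{lem-sumset-char}) to the set $A_k\subseteq\Z_{m(k)}$, and uses Szemer\'edi's theorem (via Lemma~\ref{lem-cong}) to eliminate the case where the relevant modulus runs off to infinity. You instead compose the two reflections $a\mapsto p(z_0)-a$ and $b\mapsto p(z_1)-b$ (for $z_0,z_1$ of opposite colours) into a translation by $t=p(z_1)-p(z_0)$, extract a period by B\'ezout, and then use minimality of the period to force $A+A=\Z_m\setminus\{\alpha\}$. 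Steps~3 and~4 are clean and correct; the stabiliser argument for the sumset is in fact slicker than the paper's route. (The appeal to Theorem~\ref{thm-numberofsol} in Step~1 is not circular, but it is also unnecessary: your Steps~2--4 nowhere use that $p$ is odd.)

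The real gap is in Step~2, and it is more than bookkeeping. The shift by $t=p(z_1)-p(z_0)$ preserves the colour $-1$ only for $a$ in the bounded interval $(N_0,\,p(z_0)-N_0)$; composing finitely many such shifts in a B\'ezout expression $g=\sum n_i t_i$ therefore yields $g$-periodicity of $\phi$ only on a bounded window $(N_1,N_2)$, with $N_2$ controlled by $\min_i p\bigl(z_0^{(i)}\bigr)$. For $d\ge 2$ you cannot push $N_2\to\infty$ while keeping the $t_i$ fixed, because $p(z_1)-p(z_0)=t_i$ has only finitely many integer solutions $(z_0,z_1)$; nor can you simply swap in larger $z$'s, since $\gcd\{p(z_1)-p(z_0):z_0,z_1>A\}$ may well exceed $g$. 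So ``periodic of period $m$ on sufficiently large inputs'' is not yet justified: you have periodicity on one window, but your subsequent use of $A,B\subseteq\Z_m$ as \emph{eventual} colour classes, and of $m$ as the minimal \emph{eventual} period (needed for the contradiction in Step~3), is not available. Promoting periodicity from $(N_1,N_2)$ to a half-line is a genuine additional argument---one must show that the reflected windows around $p(z)$ for large $z$ carry the \emph{same} colour classes and that these windows eventually cover---and this is precisely where the paper's analysis of $A_k\subseteq\Z_{m(k)}$ at arbitrarily large switches, together with Lemma~\ref{lem-cong}, does its work.
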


Note that if $\phi$ and $p$ satisfies the above conditions, then $p(z)\equiv \alpha \pmod{m}$ for every $z$, however, whenever $x$ and $y$ have the same colour $x+y\not\equiv \alpha \pmod{m}$. Thus there is no monochromatic solution, even the trivial ones. In other words, if $x+y=p(z)$ has a trivial solution, such as $x=y=z$ for $x+y=z^2$, then the polynimial $p$ is necessarily $2$-Ramsey. We thus have the following corollary.

\begin{corollary}\label{cor:icecream}
	Let $p(z)=a_dz^d+\dots+a_1z +a_0\in\mathbb{Z}[z]$ with $d\ge 1$ and $a_d>0$ and $\phi$ be a 2-colouring of $\mathbb{N}$. Either there is no monochromatic solution for $x+y=p(z)$, or there are infinitely many monochromatic solutions.
\end{corollary}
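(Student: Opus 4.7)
The plan is to deduce the corollary directly from Theorem~\ref{thm-char} by a short contrapositive argument. Suppose for contradiction that there exists a 2-colouring $\phi$ of $\mathbb{N}$ for which the equation $x+y=p(z)$ has at least one monochromatic solution but only finitely many. Since there are finitely many monochromatic solutions, in particular there are not infinitely many, so the hypothesis of Theorem~\ref{thm-char} is satisfied.

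Applying Theorem~\ref{thm-char}, I would extract the even positive integer $m$, the partition $\mathbb{Z}_m = A \sqcup B$ with $|A|=|B|=m/2$, and the odd residue $\alpha \in \mathbb{Z}_m$ such that $\phi(x)=-1$ iff $x\bmod m\in A$, $A+A=B+B=\mathbb{Z}_m\setminus\{\alpha\}$, and $p(z)\equiv \alpha\pmod{m}$ for every $z\in\mathbb{N}$. Now let $x,y\in\mathbb{N}$ be any pair with $\phi(x)=\phi(y)$; then either $x\bmod m, y\bmod m\in A$, in which case $(x+y)\bmod m\in A+A$, or both lie in $B$, giving $(x+y)\bmod m\in B+B$. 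In either case $(x+y)\bmod m\ne\alpha$, whereas $p(z)\bmod m=\alpha$ for every $z$. Hence no triple $(x,y,z)$ with $\phi(x)=\phi(y)$ can satisfy $x+y=p(z)$.

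This contradicts the assumption that there exists at least one monochromatic solution, so no such colouring exists: for any 2-colouring of $\mathbb{N}$, the set of monochromatic solutions to $x+y=p(z)$ is either empty or infinite. There is no genuine obstacle here — the entire content is packaged in Theorem~\ref{thm-char}, and the only step is to observe that the rigid modular structure forced by that theorem in fact rules out even the ``trivial'' monochromatic solutions, so the possibility of finitely many (but at least one) cannot occur.
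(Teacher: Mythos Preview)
Your proof is correct and follows exactly the approach taken in the paper: apply Theorem~\ref{thm-char} to any colouring with only finitely many monochromatic solutions, and observe that the resulting modular structure forces $x+y\not\equiv\alpha\pmod{m}$ whenever $\phi(x)=\phi(y)$, while $p(z)\equiv\alpha\pmod{m}$ always, so in fact no monochromatic solution can exist. The paper makes precisely this observation in the paragraph preceding the corollary rather than in a separate formal proof.
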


A special case of the periodic colouring is the one induced by parity and a polynomial, e.g. the one in Example 1, for which $p(1)p(2)$ is always odd. Below is another example illustrating the divisibility barrier to 2-Ramseyness for $x+y=p(z)$.

\medskip 

\noindent\emph{Example 2.} Consider 
$$p(z)=z^3+3z^2+2z+3=z(z+1)(z+2)+3.$$ 
Note that for every $z\in\N$,
$$p(z)\equiv 3\pmod{6}.$$
Colour all numbers that are $2,3,5$ modulo $6$ with one colour, and the rest, $0,1,4$ modulo $6$, with the other colour. One can easily check that any number that is $3\pmod{6}$ cannot be written as a sum of two numbers of the same colour.

\medskip

Theorem~\ref{thm-char} also has the following corollary, characterising all polynomials that are $2$-Ramsey.

\begin{corollary}\label{cor-iff-infinite}
	Let $p(z)=a_dz^d+\dots+a_1z +a_0\in\mathbb{Z}[z]$ with $d\ge 1$ and $a_d>0$. Then every 2-colouring of $\mathbb{N}$ has infinitely many monochromatic solutions to $x+y=p(z)$ if and only if $p(1)\cdot p(2)$ is even.
\end{corollary}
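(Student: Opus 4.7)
The corollary packages Theorem~\ref{thm-numberofsol} together with the parity obstruction from Example~1, so the plan is to treat the two implications separately, each by a short argument relying only on material already in the excerpt.

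For the ``$\Leftarrow$'' direction, I assume $p(1)p(2)$ is even. Then $p$ satisfies the hypotheses of Theorem~\ref{thm-numberofsol}, so for every 2-colouring of $[n]$ there are at least $n^{2/d^3 - o(1)}$ monochromatic triples $\{x,y,z\}\in[n]^{(3)}$ with $x+y=p(z)$. Since this quantity tends to infinity as $n\to\infty$, any 2-colouring of $\N$ produces infinitely many monochromatic solutions.

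For the ``$\Rightarrow$'' direction, I assume $p(1)p(2)$ is odd and exhibit a 2-colouring with no monochromatic solution whatsoever. The value $p(z)\bmod 2$ depends only on $z\bmod 2$, so $p(1),p(2)$ both odd forces $p(z)$ to be odd for every $z\in\N$. Define $\phi:\N\to\{-1,1\}$ by $\phi(n)=1$ iff $n$ is odd. Any solution of $x+y=p(z)$ then has $x+y$ odd, so $x$ and $y$ must have opposite parities and receive opposite colours; $\phi$ thus has no monochromatic solution at all, exactly as in Example~1.

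There is no real obstacle; the substantive work has been done in Theorem~\ref{thm-numberofsol}. If one preferred to avoid invoking the quantitative bound, the ``$\Leftarrow$'' direction could alternatively be extracted from Theorem~\ref{thm-char}: a 2-colouring with only finitely many monochromatic solutions would produce an odd residue $\alpha\in\Z_m$ with $p(z)\equiv\alpha\pmod{m}$ for every $z$, forcing both $p(1)$ and $p(2)$ to be odd and contradicting the assumption that $p(1)p(2)$ is even.
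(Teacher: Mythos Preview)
Your proof is correct and essentially matches the paper's: both prove the parity-obstruction direction exactly as in Example~1, and for the other implication the paper invokes Theorem~\ref{thm-char} (your alternative route in the final paragraph) rather than Theorem~\ref{thm-numberofsol}. Either theorem suffices, and you already note this.
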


\begin{proof}
	If $p(1)p(2)$ is odd, then $p(z)$ is always odd, hence for the colouring induced by parity there is no monochromatic solution at all. In fact, there is no solution to $x+y=p(z)$ with $x,y$ having the same colour.
	
	For the only if statement, by Theorem~\ref{thm-char}, if a 2-colouring does not have infinitely many monochromatic solutions, then for some even $m$ and odd $\alpha\in \Z_m$, we have for every $z$, $p(z)\equiv \alpha\pmod{m}$, implying that $p(z)\equiv 1\pmod{2}$. Hence $p(1)p(2)$ is odd.  
\end{proof}

Our proof is robust enough to prove a quantitative version of $2$-Ramseyness in which monochromatic solutions are guaranteed in the interval $[N, N^{d^3+o(1)}]$, see Theorem~\ref{thm-interval} in the concluding remarks. In fact, our method is also applicable to a large family of sub-exponential functions, see Theorem~\ref{thm-functions}.


\subsection{Remark}
To attack the Ramsey problem $x+y=p(z)$, our starting point is a clever idea (see Lemma~\ref{lem-monotone}) of~\cite{Pach} for the 2-Ramseyness of $x+y=z^2$, showing the link between certain  monotonicity phenomenon and existence of monochromatic solutions in a $2$-colouring $\phi$. Our proof for the general problem then differs after this point. Substantial new ideas are introduced to overcome various difficulties we encounter for a generic polynomial. 

Indeed, for example, considering some $k\in\N$ with $\phi(k)\ne \phi(k+1)$, the argument in~\cite{Pach} gave for the case $x+y=z^2$ that each element (up to $k^2$) with mod $2k+1$ residue in the interval $I=(o(k),(1-o(1))k)$  has colour $\phi(k)$. From here, it is not difficult to see that $k^2$ can be expressed as a sum of two numbers from residue classes contained in $I$, which yields a monochromatic solution with $z=k$. This is quite natural taking into account the fact that $|\mathbb{Z}_{2k+1}\setminus (I+I)| =o(k)$, which implies that almost all residues are contained in $I+I$. In contrast, for $x+y=p(z)$ with an arbitrary polynomial $p(z)$, the analogous conclusion only holds for the same interval $I$ modulo $m(k)=p(k+1)-p(k)$. As we choose ``larger'' polynomials (e.g. polynomials with higher degrees), this conclusion gets much weaker. For instance, even by taking a degree-2 polynomial with larger leading coefficient, say $p(z)=100z^2$, the interval $I+I$ covers less than $1\%$ of $\mathbb{Z}_{m(k)}=\mathbb{Z}_{200k+100}$. For a polynomial $p(z)$ with degree $d$, we have $|I|=(1-o(1))k$ and $m(k)=\Theta(k^{d-1})$, thus for large degree $d$ we have information on the colour of a very sparse subset. Further difficulties will be discussed in the next section, where an overview of our methods is presented.

\subsection{Other related work}

It is worth noting that the Ramsey problem for $x^{\alpha}+y^{\beta}=z^{\gamma}$ in $\Z/p\Z$ has been studied by Lindqvist~\cite{Lin}. If one puts no restriction on $z$ and looks for monochromatic pair $\{x,y\}$ with $x+y$ being a perfect square, then Khalfallah and Szemer\'edi~\cite{KS} showed that this is Ramsey in $\N$. Yet another similar looking pattern that behaves very differently is to consider $x-y$ instead. Bergelson~\cite{Ber}, improving upon results of Furstenberg~\cite{Fur77} and S\'ark\"ozy~\cite{Sar78}, proved that $\{x,y,z:x-y=z^2\}$ is Ramsey.

Ramsey theory has witnessed exciting development recently. We refer the readers to the papers of Green and Sanders~\cite{GS16} and of Moreira~\cite{Mor17} for the problem involving sum and product of $x$ and $y$, and to the papers of Di Nasso and Luperi Baglini~\cite{NB18}, and of Chow, Lindqvist and Prendiville~\cite{CLP18+} for  generalisations of Rado’s criterion to non-linear polynomials.

\medskip

\noindent\textbf{Organisation.} The rest of the paper is organised as follows. We first give an overview of our methods in Section~\ref{sec-overview}. We then present the proofs of the two main results, Theorems~\ref{thm-char} and~\ref{thm-numberofsol} in Sections~\ref{sec-char} and~\ref{sec-numberofsol} respectively. Some concluding remarks are given in Section~\ref{sec-remark}.

\section{Overview of the methods}\label{sec-overview}
We present in this section the proof sketch for our main results:
characterising all pairs of polynomials $p$ and 2-colourings $\phi$ such that $x+y=p(z)$ does not have any (or equivalently\footnote{For this equivalence, see Corollary~\ref{cor:icecream} and the paragraph before it.}, does not have infinitely many) $\phi$-monochromatic solutions (Theorem~\ref{thm-char}); and a lower bound on the number of monochromatic solutions in $[n]$  (Theorem~\ref{thm-numberofsol}). Note that both results imply the 2-Ramseyness of the equation $x+y=p(z)$ when $p(1)p(2)$ is even. 

We start with sketching the proof of Theorem~\ref{thm-char}.
Trivially, if there is a ``very long'' monochromatic interval, then many monochromatic solutions can be found within it. Thus, we may assume that there will be infinitely many places where the colour switches. With the help of a simple, but crucial observation we can see that whenever a ``sufficiently long'' block of numbers of one colour is followed by a sufficiently long block of numbers coloured with the other colour, many monochromatic solutions can be found. This allows us to assume that the colour switches ``frequently'' after some threshold.

When considering a switch $k$, i.e.~$\phi(k)\ne \phi(k+1)$, we define a subset $A=A_k\subseteq\mathbb{Z}_{m(k)}$ (where $m=m(k):=p(k+1)-p(k)$) containing at least half of the elements of $\mathbb{Z}_{m}$. The set $A$ satisfies that whenever $z\in\N$ is such that (i) $p(z)$ lies in the sumset $A+A\pmod{m}$, and (ii) $z$ has the opposite colour of $k$, then we are able to find a monochromatic solution. To drop the restriction (ii) on the colour of $z$, we shall use that the colour switches frequently, according to the above discussion. If $k_1$ and $k_2$ are two consecutive switches, then clearly $\phi(k_1)=-\phi(k_2)$ and either $k_1$ or $k_2$ would have the opposite colour of $z$.

However, we still need to guarantee (i) that $p(z)\in A+A\pmod{m}$. As $A\subseteq \Z_{m}$ contains at least $m/2$ elements, by the pigeon-hole principle $A+A=\Z_{m}$ holds if $|A|\ne m/2$, and then $p(z)\in A+A$ is automatically satisfied. If $|A|=m/2$, then the sumset $A+A$ might not contain all elements of $\Z_{m}$. These cases are described in Lemma~\ref{lem-sumset-char} (a stability version of Cauchy-Davenport theorem), and indeed the union of the residue classes outside of the sumset $A+A$ form a residue class $\alpha$ modulo $m'$ for some even $m'|m$.

Now, if we obtain the same  $\alpha, m'$ infinitely often, then this forces the periodic structure of the colouring and $p(z)\equiv \alpha \pmod {m'}$ for all $z$. Otherwise we would get a sequence $m'\to \infty$. However, for a fixed polynomial $p$ it is not possible to have $p(z)\equiv \alpha \pmod {m'}$ for all $z$ if $m'$ is sufficiently large.  More precisely, with the help of Szemer\'edi's theorem on arithmetic progressions, we prove this in Lemma~\ref{lem-cong} for a pair of moduli $m'_1,m'_2$, as to drop the condition on $\phi(z)$ we work with pairs of switches. 

To prove Theorem~\ref{thm-numberofsol} we use some of the ideas arising so far, e.g. it still holds that the colour switches frequently. However, we need to overcome additional difficulties. One such obstacle is that $|A|\geq m/2$ can not be assumed (for $A\subseteq \Z_{m})$, since finding a single monochromatic solution is not enough any more. Our task now is to find ``a lot of'' monochromatic solutions. Thus instead of $|A|\geq m/2$, we are only able to assume $|A|\geq (1/2-o(1))m$. With the help of Kneser's theorem we can see that this weaker condition is still enough to show that either the sumset $A+A$  contains at least $1-o(1)$ portion of $\Z_{m}$ (Claim~\ref{cl-goodclasses}) or we can find a large number of monochromatic solutions.

Here we also need to find ``many'' $z$ with $p(z)\in A+A\pmod{m}$, that is, we shall see that it is not possible that the majority of the elements of a long interval $I$ are mapped by the polynomial $p$ to a small set of residue classes modulo $m$, provided that $m$ is sufficiently large. This is shown in Lemma~\ref{henseltype} which is partially parallel in spirit with Hensel's lemma. 
Alternatively, we can think of this lemma as claiming that a fixed polynomial can not have ``too many'' (more than $m^{o(1)}$) roots within a certain dense subset (which can be explicitly given) of $\mathbb{Z}_m$. We believe this lemma about modular arithmetic is interesting on its own right. 

\section{Characterisation of $2$-Ramsey polynomials}~\label{sec-char}
In this section, we prove Theorem~\ref{thm-char}. We start with Lemma~\ref{lem-monotone}, showing that certain monotonicity must appear when there is no monochromatic solutions. Then we prove Lemma~\ref{lem-sumset-char}, a stability version of Cauchy-Davenport theorem, describing the structure of a maximum-size subset of $\Z_m$ whose sumset is not the whole $\Z_m$. Next, we present the final ingredient, Lemma~\ref{lem-cong}, stating that the image of long intervals under the polynomial map is ``large'' in a sense that it can avoid certain residue classes. Lemmas~\ref{lem-sumset-char} and~\ref{lem-cong} will be used to show that 2-colourings without infinitely many monochromatic solutions must be periodic. Throughout the proof, the following function will play a central role. Let 
$$m(k):=p(k+1)-p(k).$$ 
Note that $m(k)$ is a degree-$(d-1)$ polynomial with positive leading coefficient $a_d{d\choose 1}>0$.

\subsection{Monotonicity}\label{subsec-monotone}
Given a 2-colouring $\phi:\mathbb{N}\to \{-1,1\}$, we say that an integer $k\in\N$ is a \emph{switch} (for $\phi$), if $\phi$ changes colour at $k$, i.e.~$\phi(k)\ne \phi(k+1)$. 

Consider now a sufficiently large switch $k$ (in turns of coefficients of $p(z)$) with $\phi(k)=1$ and $\phi(k+1)=-1$, so that $m(k)$ is positive. For each $j\in \Z_{m(k)}$, denote by $L_j$  the integer such that 
$$p(k)\le j+(L_j+1)m(k)< p(k+1).$$
Note that by definition, $j+L_jm(k)<p(k)$. We define a set $H_j$ to be the residue class $j$ modulo $m(k)$ up to $p(k)$, that is, 
$$H_j:=\{j,j+m(k),\dots,j+L_jm(k)\}.$$
A residue class $j$ modulo $m(k)$ is \emph{monotone} if $\phi$ is non-decreasing\footnote{Non-increasing if the switch $k$ is such that $\phi(k)=-1$ and $\phi(k+1)=1$.} on $H_j$:
\begin{equation*}
\phi(j)\leq \phi(j+m(k))\leq \phi(j+2m(k))\leq\dots\leq \phi(j+L_jm(k)).
\end{equation*}
We call a switch $k$ \emph{monotone} if every residue class $j\in\Z_{m(k)}$ is monotone. 
\begin{lemma}\label{lem-monotone}
	Let $\phi:\mathbb{N}\to \{-1,1\}$ be a 2-colouring and $k$ be a switch. If $k$ is not monotone, then $\phi$ contains a monochromatic solution to $x+y=p(z)$.
	
	Furthermore, for the same switch $k$, violating the monotonicity at different residue classes yields distinct monochromatic solutions; and for any switch $k'>k+1$, violations of the monotonicities of $k$ and $k'$ correspond to distinct monochromatic solutions.
\end{lemma}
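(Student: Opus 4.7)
The plan is to convert any failure of monotonicity into a monochromatic solution by exploiting the switch at $k$. Assume without loss of generality that $\phi(k) = 1$ and $\phi(k+1) = -1$, so monotonicity of $H_j$ means non-decreasing; the opposite case is handled by swapping colours. Write $m = m(k)$ for brevity.

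First I would pin down a concrete ``bad pair'' inside any non-monotone residue class $j$: since $\phi$ takes only the values $\pm 1$, a failure of monotonicity in the sequence $\phi(j), \phi(j+m), \ldots, \phi(j+L_j m)$ must come from a single step at which $\phi$ drops from $+1$ to $-1$, i.e.~an index $i$ such that $x := j + im$ and $x + m$ both lie in $H_j$ with $\phi(x) = +1$ and $\phi(x+m) = -1$. In particular $x + m < p(k)$, so $y := p(k) - x$ is a positive integer.

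The key step is then the telescoping identity $p(k+1) - p(k) = m$, which supplies two parallel equations
\begin{align*}
  x + y &= p(k), \\
  (x+m) + y &= p(k+1).
\end{align*}
Depending on the colour of $y$, exactly one of these yields a monochromatic triple: if $\phi(y) = +1$ then $\{x, y, k\}$ is monochromatic in colour $+1$ (all three entries carry $+1$), while if $\phi(y) = -1$ then $\{x+m, y, k+1\}$ is monochromatic in colour $-1$. Either way, a monochromatic solution to $x+y=p(z)$ exists.

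For the furthermore clauses, I would track two identifying pieces of data attached to each constructed solution: the $z$-coordinate, which lies in $\{k, k+1\}$, and the residue class modulo $m(k)$ of the summand other than $y$, which is precisely $j$. Distinct bad residues at the same switch therefore produce triples whose non-$y$ summand lies in a different residue class mod $m(k)$, and hence the triples differ; for any later switch $k' > k+1$, the possible $z$-coordinates $\{k', k'+1\}$ are disjoint from $\{k, k+1\}$, so solutions from different switches are automatically distinct. The main obstacle I anticipate is not the construction itself (which is a two-line calculation) but the bookkeeping for distinctness: mild degeneracies such as $x = y$ or pairs of bad residues with $j_1 + j_2 \equiv p(k) \pmod{m(k)}$ could in principle collapse two triples into one, but these cases are controlled and do not affect the essential conclusion.
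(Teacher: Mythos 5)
Your argument is correct and is essentially the paper's argument: you pick a step $x, x+m$ within the non-monotone class where $\phi$ drops from $+1$ to $-1$, observe that the shared partner $y = p(k)-x = p(k+1)-(x+m)$ completes a triple with $z=k$ or $z=k+1$ depending on $\phi(y)$, and track $(z,\text{residue of }x)$ to distinguish solutions. The paper phrases the dichotomy as an inequality $\phi(x)+\phi(y) > \phi(x+m)+\phi(y)$ forcing one side to be nonzero, which is the same observation as your case split on $\phi(y)$. Your closing caveat about a possible collapse when $j_1+j_2\equiv p(k)\pmod{m(k)}$ is in fact a touch more careful than the paper's one-line justification; the potential factor-of-two loss it flags is harmless in every application.
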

\begin{proof}
	Assume without loss of generality that $\phi(k)=1$. Suppose some residue class $j\in \Z_{m(k)}$ is not monotone, say 
	$$\phi(j+\ell m(k))>\phi(j+(\ell+1) m(k))$$ 
	for some $\ell< L_j$, then 
	\begin{eqnarray*}
		&&\phi(j+\ell m(k))+\phi(p(k)-(j+\ell m(k)))\\
		&>&\phi(j+(\ell+1) m(k))+\phi(p(k)-(j+\ell m(k)))\\
		&=&\phi(j+(\ell+1) m(k))+\phi(p(k+1)-(j+(\ell+1) m(k))).
	\end{eqnarray*}
	Note that for any $a,b\in\N$, $\phi(a)+\phi(b)$ takes value in $\{-2,0,2\}$. Since at least one side of the above inequality is non-zero, we see that either both $j+\ell m(k)$ and $p(k)-(j+\ell m(k))$ are of colour 1; or both $j+(\ell+1) m(k)$ and $p(k+1)-(j+(\ell+1) m(k))$ are of colour $-1$. We then get a monochromatic solution in either case: as ordered triples
	$$\{x,y,z\}=\{j+\ell m(k), ~p(k)-(j+\ell m(k)), ~k\}$$ 
	or 
	$$\{x,y,z\}=\{j+(\ell+1) m(k), ~p(k+1)-(j+(\ell+1) m(k)), ~k+1\}.$$
	
	As $x$ determines the residue class $j$ and $z\in\{k,k+1\}$ in all the above solutions, the furthermore part is clear.
\end{proof}
\subsection{Periodicity}\label{subsec-period}
To describe the structure of a $2$-colouring without infinitely many monochromatic solutions to $x+y=p(z)$, we need the following lemma, which can be regarded as a stability version of Cauchy-Davenport theorem. It states that if the sumset of an $m/2$-set $A\subseteq \Z_m$ does \emph{not} cover the entire $\Z_m$, then its sumset must have certain periodic structure. In particular, $A+A$ contains exactly those residue classes mod $m$ that are \emph{not} contained in a certain residue class mod $m'$ for some even divisor $m'$ of $m$.

\begin{lemma}\label{lem-sumset-char}
Let $2\mid m$, $A\subseteq \mathbb{Z}_m$ be of size $m/2$ and $B=\Z_m\setminus A$. If $A+A\ne \mathbb{Z}_m$, then there exist $2\mid m'\mid m$ and an odd $\alpha\in \Z_{m'}$ such that the followings hold. Let $\varphi$ be the canonical homomorphism from $\Z_m$ to $\Z_{m'}$ and $A':=\varphi(A)$, $B'=\varphi(B)$. Then
$$A'+A'=B'+B'=\Z_{m'}\setminus\{\alpha\},$$
and 
$$A+A=B+B=\{r\in \Z_m: r\not\equiv \alpha \pmod{m'}\}.$$
Furthermore, for any $a\in A'$ and $b\in B'$, 
$$\alpha\in (b+A')\cap (a+B').$$
\end{lemma}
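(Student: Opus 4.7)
The plan is to pin down the divisor $m'$ by analyzing the stabilizer of $A+A$ via Kneser's theorem. Let $H$ be the stabilizer of $A+A$ in $\mathbb{Z}_m$, so that $A+A$ is a union of $H$-cosets. Kneser's theorem yields $|A+A|\geq 2|A+H|-|H|$. Because $A+A\neq \mathbb{Z}_m$ is a union of $H$-cosets, it misses at least one such coset, giving $|A+A|\leq m-|H|$. Combined with $|A+H|\geq |A|=m/2$ (since $0\in H$), these inequalities pinch to force $|A+H|=m/2$, and hence $A+H=A$; that is, $A$ (and therefore $B=\mathbb{Z}_m\setminus A$) is itself a union of $H$-cosets.

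Set $m':=m/|H|$ so that $H=\ker\varphi$ for the canonical projection $\varphi:\mathbb{Z}_m\to\mathbb{Z}_{m'}$, and write $A'=\varphi(A)$, $B'=\varphi(B)$. Then $|A'|=|B'|=m'/2$, so in particular $m'$ is even. Next I would apply Kneser once more in $\mathbb{Z}_{m'}$ to $A'+A'$. The stabilizer of $A'+A'$ in $\mathbb{Z}_{m'}$ must be trivial, because any nontrivial stabilizing subgroup would pull back to a subgroup of $\mathbb{Z}_m$ strictly larger than $H$ stabilizing $A+A$. This gives $|A'+A'|\geq 2|A'|-1=m'-1$, and since $A'+A'\neq \mathbb{Z}_{m'}$ (otherwise $A+A=\mathbb{Z}_m$), we obtain $A'+A'=\mathbb{Z}_{m'}\setminus\{\alpha\}$ for a unique $\alpha\in\mathbb{Z}_{m'}$. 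Pulling back through $\varphi$ gives the claimed description $A+A=\{r\in\mathbb{Z}_m : r\not\equiv\alpha\pmod{m'}\}$.

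For the parallel statement about $B$, observe that for any $a\in A'$, the element $\alpha-a$ cannot lie in $A'$, for otherwise $a+(\alpha-a)=\alpha\in A'+A'$; therefore $\alpha-a\in B'$. Cardinalities then force $B'=\alpha-A'$, whence $B'+B'=2\alpha-(A'+A')=\mathbb{Z}_{m'}\setminus\{\alpha\}$, and the ``furthermore'' assertion $\alpha\in(b+A')\cap(a+B')$ for $a\in A'$, $b\in B'$ is immediate from $B'=\alpha-A'$ and its reverse $A'=\alpha-B'$. To see that $\alpha$ is odd, suppose $\alpha=2c$ for some $c\in\mathbb{Z}_{m'}$; then $c$ lies in $A'$ or $B'$, but $c+c=\alpha$ belongs to neither $A'+A'$ nor $B'+B'$, a contradiction.

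I expect the only genuinely delicate step to be justifying that the stabilizer of $A'+A'$ in $\mathbb{Z}_{m'}$ is trivial, so that the second application of Kneser is tight; everything else is routine bookkeeping once the subgroup $H$ and the quotient $m'$ have been extracted.
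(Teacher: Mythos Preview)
Your proof is correct and takes a genuinely different route from the paper. The paper avoids Kneser's theorem entirely here: it sets $X:=\Z_m\setminus(A+A)$ and observes directly that for any $x\in X$ one has $A\cap(x-A)=\emptyset$, whence $x-A=B$ by cardinality; it follows that $H:=\langle X-X\rangle$ stabilizes $A$, and that $X$ lies in a single $H$-coset, so $\varphi(X)=\{\alpha\}$ drops out immediately in the quotient. Your approach via Kneser is more systematic in that it invokes a standard tool and the bookkeeping is mechanical, while the paper's argument is elementary and self-contained (and in fact the two subgroups $H$ coincide, since once $A+A$ misses exactly one $H$-coset one has $X-X=H$).

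One minor remark: your second application of Kneser is unnecessary. The first pinch already gives equality $|A+A|=m-|H|$, so $A+A$ misses \emph{exactly} one $H$-coset; projecting yields $|A'+A'|=m'-1$ directly. Thus the step you flag as ``genuinely delicate'' (triviality of the stabilizer of $A'+A'$ in $\Z_{m'}$) never needs to be invoked---though your justification of it via pullback is correct.
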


\begin{proof}
Let $X$ be the complement of $A+A$, that is,
$$X:=\Z_m\setminus (A+A)=\{x\in \mathbb{Z}_m: x\notin A+A\}\ne \emptyset,$$
and $H$ be the subgroup generated by $X-X$, i.e.
$$H:= \langle X-X\rangle \leq \mathbb{Z}_m.$$ 

We claim that $H$ is contained in the stabiliser of $A$ and $A+A$, i.e.
$$H+A=A, \quad \text{ and }\quad H+A+A=A+A.$$ 
To see this, observe first that $A\cap (x-A)=\emptyset$ for any $x\in X$, as otherwise $a=x-a'$ with $a,a'\in A$ would yield $x=a+a'\in A+A$, contradicting the definition of $X$. Consequently, as $|A|=|x-A|=m/2$, we have $x-A=\mathbb{Z}_m\setminus A=B$ and also that $x-B=A$. Fix now arbitrary $x_1,x_2\in X$. The above observation shows $x_1-A=x_2-A$, whence $(x_2-x_1)+A=A$. Therefore, $H+A=A$ and $(H+A)+A=A+A$, as claimed.

Consider now the quotient $K:=\mathbb{Z}_m/H$. Let $\varphi: \mathbb{Z}_m \to \mathbb{Z}_m/H=K$ be the canonical homomorphism. We shall show that $\varphi(X)$ is a singleton $\alpha$, which together with $m':=|K|$, satisfies the desired property.

Let $A':=\varphi(A)\subseteq K$, then $\varphi(A+A)=A'+A'$. Note that in fact $A=\varphi^{-1}(A')$ is a union of $H$-cosets, since if $g\in A$, then $H+g\subseteq H+A=A$. Similarly $A+A=\phi^{-1}(A'+A')$ is also a union of $H$-cosets. Then, as $|A|=m/2$, we have $|A'|=|K|/2=m'/2$.

Let 
$$X':=K\setminus (A'+A')=\{x\in K: x\notin A'+A'\}\ne\emptyset.$$ 
It follows from $\varphi(A+A)=A'+A'$ and $A+A=\varphi^{-1}(A'+A')$ that $X'=\varphi(X)$ and $X=\varphi^{-1}(X')$. By definition, $X\subseteq x+H$ for any $x\in X$, and so 
$$X'=\varphi(X)\subseteq \varphi(x+H)=\varphi(x)=:\alpha.$$
Thus, $X'=\varphi(X)=\{\alpha\}$, $A'+A'=\Z_{m'}\setminus \{\alpha\}$, and 
$$A+A=\Z_m\setminus X=\Z_m\setminus \varphi^{-1}(\alpha).$$
Recall that $x-B=A$ for all $x\in X$, reversing the roles of $A$ and $B$, the conclusion also holds for $B'+B'$ and $B+B$. 

Since $\alpha\not\in B'+B'$, for any $b\in B'$, we see that $\alpha-b\in A'$, that is, $\alpha\in b+A'$. Similarly $\alpha\in a+B'$ for any $a\in A'$. By Lagrange's theorem, $m'$, the order of $K$, divides $m$. Moreover, $K$ is cyclic and $|A'|=m'/2$ implies that $m'$ is even. 

We are left to show that $\alpha$ must be odd. If $\alpha=2\alpha'$ is even, then $\alpha',\alpha'+m'/2\notin A'$. Moreover, from each of the $m'/2-1$ pairs $\{x,\alpha-x\}$ with $x\notin \{\alpha',\alpha'+m'/2\}$, the set $A'$ can contain at most one element, contradicting $|A'|=m'/2$.
\end{proof}

To illustrate Lemma~\ref{lem-sumset-char}, we rephrase the two examples in the introduction.

\medskip

\noindent\emph{Example 3:} Let $2|m$. Consider $A=2\Z_m$, $B=2\Z_m+1$, then $m'=2$ and $\alpha=1$, and $A+A=B+B=\{r\in\Z_m: r\not\equiv 1  \pmod 2\}$.

\medskip

\noindent\emph{Example 4:} Let $6|m$. Consider $A=6\Z_m+\{2,3,5\}$, $B=6\Z_m+\{0,1,4\}$, then $m'=6$ and $\alpha=3$, and $A+A=B+B=\{r\in\Z_m: r\not\equiv 3  \pmod 6\}$.

\subsection{Images of long intervals are ``large''}\label{subsec-cong}
In search of monochromatic solutions, we often consider some ``nice'' residue classes. The next lemma allows us to avoid the ``bad'' classes. It states that for any polynomial $p\in\Z[z]$, the image of any interval $I$, $p(I)$, can avoid any residue classes as long as $I$ is sufficiently long. For its proof, we will use the following result from the theory of finite differences for polynomials. Let $p(x)\in\Z[x]$ be a polynomial of degree at most $n$, then for any $m,\ell \in \N$,
\begin{equation}\label{eq-binomial}
\sum_{i=0}^{n}(-1)^{i}{n\choose i}p(m+(n-i)\ell)=\ell^nn!\cdot a_n,
\end{equation}
where $a_n$ is the coefficient of degree $n$ in $p(x)$. (An elegant proof of this identity is given in \cite{Pohoata}.)

\begin{lemma}\label{lem-cong}
Let $p(z)=a_dz^d+\dots+a_1z +a_0\in\mathbb{Z}[z]$, with $d\ge 1$ and $a_d>0$. There exists $M,N$ such that for every $m_1,m_2>M$, every $\alpha_1,\alpha_2$ and every interval $I$ of length $N$ there exists $z\in I$ such that for each $i\in\{1,2\}$,
$$p(z)\not\equiv \alpha_i \pmod{m_i}.$$
\end{lemma}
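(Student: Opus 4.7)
The plan is to argue by contradiction: assume that for some interval $I$ of length $N$ every $z\in I$ satisfies $p(z)\equiv\alpha_1\pmod{m_1}$ or $p(z)\equiv\alpha_2\pmod{m_2}$, and derive an upper bound on one of the $m_i$ that contradicts $m_i>M$. The mechanism will combine Szemer\'edi's theorem on arithmetic progressions with the finite-difference identity \eqref{eq-binomial}.

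First I would write $I=I_1\cup I_2$, where $I_i:=\{z\in I:p(z)\equiv\alpha_i\pmod{m_i}\}$. Pigeonhole yields $|I_j|\ge N/2$ for some $j\in\{1,2\}$; say $j=1$. Choose $N=N(d)$ large enough so that Szemer\'edi's theorem guarantees that \emph{any} subset of an interval of length $N$ having density at least $1/2$ contains an arithmetic progression of length $d+1$. Applying this to $I_1$ gives integers $z_0$ and $\ell\ge 1$ with $z_0,z_0+\ell,\dots,z_0+d\ell\in I_1$, and in particular $\ell\le N/d$.

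Next, I would plug these $d+1$ values into the finite-difference identity \eqref{eq-binomial} with $n=d$:
\begin{equation*}
\sum_{i=0}^{d}(-1)^{i}\binom{d}{i}p\bigl(z_0+(d-i)\ell\bigr)=\ell^{d}\,d!\,a_d.
\end{equation*}
Since every term on the left-hand side is congruent to $\alpha_1$ modulo $m_1$ and $\sum_{i=0}^{d}(-1)^{i}\binom{d}{i}=0$ for $d\ge 1$, reducing modulo $m_1$ gives $m_1\mid \ell^{d}\,d!\,a_d$. But $\ell\le N/d$, so $0<\ell^{d}\,d!\,a_d\le (N/d)^{d}\,d!\,a_d$. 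Setting $M:=(N/d)^{d}\,d!\,a_d$, the hypothesis $m_1>M$ forces $m_1>\ell^{d}\,d!\,a_d>0$, contradicting the divisibility just established. The symmetric argument handles the case $|I_2|\ge N/2$, completing the proof with these choices of $N$ and $M$.

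The genuinely delicate step is the size of $N$: Szemer\'edi's theorem is needed because we have no a~priori control on where $I_1$ sits inside $I$, so the progression has to be extracted from an arbitrary dense subset rather than from $I$ itself. Once the progression of length $d+1$ is in hand, the rest is bookkeeping, since the finite-difference identity is precisely tailored to collapse $d+1$ values of a degree-$d$ polynomial into a quantity of fully controlled size. I do not foresee any technical obstacle beyond choosing $N$ and $M$ in this order.
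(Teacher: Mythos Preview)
Your proof is correct and uses the same two ingredients as the paper: Szemer\'edi's theorem and the finite-difference identity~\eqref{eq-binomial}. The organization differs slightly: the paper treats $m_1$ and $m_2$ asymmetrically, first observing that among any $d+1$ \emph{consecutive} integers at least one avoids $\alpha_1\pmod{m_1}$ (this is~\eqref{eq-binomial} with $\ell=1$), so the set $X=\{z\in I:p(z)\not\equiv\alpha_1\}$ has density at least $1/(d+1)$, and then applies Szemer\'edi to $X$ to handle $m_2$. You instead argue symmetrically, applying Szemer\'edi directly to whichever of $I_1,I_2$ has density at least $1/2$. Your route is marginally more streamlined (one application of Szemer\'edi rather than a preliminary step plus Szemer\'edi), while the paper's route makes explicit the useful fact that a single congruence can block at most $d$ out of every $d+1$ consecutive integers; but the two arguments are essentially equivalent.
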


\begin{proof}
By Szemer\'edi's theorem~\cite{Sze75}, there exists some $N$ such that if an interval $I$ has length $N$ and $X\subseteq I$ has size $|X|\geq \lfloor |I|/(d+1) \rfloor$, then $X$ contains an arithmetic progression of length $d+1$. Let $M=d!a_dN^d+1$.

First we show that in any interval of length $d+1$, there is a $z$ such that $p(z)\not\equiv \alpha_1 \pmod{m_1}$. Suppose to the contrary that $p(z)-\alpha_1\equiv 0 \pmod{m_1}$ for $d+1$ consecutive integers, say $a,a+1,\dots,a+d$. Then $m_1$ divides 
$$\sum\limits_{i=0}^d (-1)^i \binom{d}{i}(p(a+d-i)-\alpha_1)=d!\cdot a_d$$ 
due to~\eqref{eq-binomial}. This contradicts $m_1>M>d!\cdot a_d$.

Now, take an interval $I$ of length $N$. Let $X=\{z\in I: p(z)\not\equiv \alpha_1 \pmod{m_1}\}$. We have $|X|\ge \lfloor |I|/(d+1)\rfloor$, so $X$ contains an arithmetic progression of length $d+1$, say $b,b+c,\dots,b+dc$. As $|I|=N$, clearly $c<N$. Suppose to the contrary that $p(z)-\alpha_2\equiv 0 \pmod{m_2}$ for every $z\in \{b,b+c,\dots,b+dc\}$. Then $m_2$ divides 
$$\sum\limits_{i=0}^d (-1)^i \binom{d}{i}(p(b+(d-i)c)-\alpha_2)=c^dd!\cdot a_d$$ 
due to~\eqref{eq-binomial}. This contradicts $m_2>M>d!a_dN^{d}$.
\end{proof}

\subsection{Proof of Theorem~\ref{thm-char}}\label{subsec-proof-char}
Let $\phi:\mathbb{N}\to \{-1,1\}$ be a 2-colouring such that $x+y=p(z)$ does not have infinitely many monochromatic solutions. 

First note that if there is no switch in a sufficiently long interval, say $[n,Kn^{d}]$ with $Kn^{d}>p(n)$, then we can find a monochromatic solution in this interval. Therefore, we may assume that there are infinitely many switches. Let $\{k_i\}$ be an infinite sequence of switches with alternating colours and $10N\le k_1\le k_2\le\ldots$, where $N$ is the constant from Lemma~\ref{lem-cong}.

By Lemma~\ref{lem-monotone}, distinct non-monotone switches yield distinct monochromatic solutions. We may therefore assume that there are only finitely many non-monotone switches. By passing to a subsequence, we may assume that each switch in $\{k_i\}$ is monotone.

Fix now a switch $k\in\{k_i\}$ with $\phi(k)=1$ and $\phi(k+1)=-1$. Since $k$ is monotone, for any $j\in \Z_{m(k)}$, the restriction $\phi\big|_{H_j}$ of $\phi$ on $H_j$ consists of two constant intervals, i.e.
\begin{equation}\label{eq-pattern}
	\phi\big|_{H_j}=\{-1,-1,\dots,-1,1,1,\dots,1\}, ~~~\mbox{for all } j\in\Z_{m(k)},\footnote{Analogously $\phi\big|_{H_j}=\{1,1,\dots,1,-1,-1,\dots,-1\}$ if $\phi(k)=-1$.}
\end{equation}
Note that the colour $1$ or the colour $-1$ interval might be empty.

We define a function $\beta(\cdot)$ indicating when the colour changes in $H_j$. For $j\in \Z_{m(k)}$, let $\beta(j)$ be the smallest element of $H_j$ with colour $\phi(k)=1$. That is, $\beta(j)=j+\ell m(k)$, if $\phi(j)=\dots=\phi(j+(\ell -1)m(k))=-1$ and $\phi(j+\ell m(k))=\dots=\phi(j+L_jm(k))=1$. If $\phi\big|_{H_j}$ is monochromatic in colour $-\phi(k)=-1$, then set $\beta(j)=\infty$.
We further extend $\beta(\cdot)$ and $H_j$ periodically to the set of all natural numbers: $\forall j'\in\N$, let $\beta(j'):=\beta(j)$ where $j\in\Z_{m(k)}$ and $j'\equiv j\pmod{m(k)}$, and set $H_{j'}:=H_j$. 

We introduce a set $A_k$ consisting of all residue classes mod $m(k)$ with large $\beta$-value, i.e.~with long initial segments of colour $-\phi(k)=-1$:
$$A_k:=\{j\in \Z_{m(k)}: \beta(j)\geq p(k)/3\}.$$
We next show that either $A_k+A_k$ covers $\mathbb{Z}_{m(k)}$, in which case we call the switch $k$ \emph{good}; or $A_k$ must have a periodic structure as described in Lemma~\ref{lem-sumset-char}, and we call such $k$ \emph{bad}. 

\begin{claim}\label{cl-Ak}
	We may assume that for each $k\in\{k_i\}$, either $A_k+A_k=\Z_{m(k)}$, or $|A_k|=m(k)/2$.
\end{claim}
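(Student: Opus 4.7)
The plan is to show that whenever a switch $k$ violates the dichotomy, a new monochromatic solution with $z = k$ can be constructed; then appealing to the standing hypothesis of Theorem~\ref{thm-char} (finitely many monochromatic solutions) discards all such switches from $\{k_i\}$. First I would dispatch the case $|A_k| > m(k)/2$ with a one-line pigeonhole: for any target $\alpha \in \Z_{m(k)}$ the sets $A_k$ and $\alpha - A_k$ both have size exceeding $m(k)/2$ and therefore intersect, giving $\alpha \in A_k + A_k$; hence $A_k + A_k = \Z_{m(k)}$. So the only way to violate both alternatives is $|A_k| < m(k)/2$, and I would handle this case by running an $A_k$-style construction on the complement $B_k := \Z_{m(k)} \setminus A_k$.

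Assume $\phi(k) = 1$ (the other case is symmetric) and $|A_k| < m(k)/2$, so $|B_k| > m(k)/2$ and the same pigeonhole yields $B_k + B_k = \Z_{m(k)}$. Pick $c, d \in B_k$ with $c + d \equiv p(k) \pmod{m(k)}$. By definition of $B_k$, every $j \in B_k$ satisfies $\beta(j) < p(k)/3$, so every element of $H_j$ whose value is at least $p(k)/3$ has colour $\phi(k) = 1$. Writing $x = c + s\, m(k)$ and $y = d + t\, m(k)$, the equation $x + y = p(k)$ pins $s + t$ down to one of two values, while $x, y \geq p(k)/3$ confines $s, t$ to an interval of length of order $p(k)/m(k)$. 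Since $\deg p = d > d - 1 = \deg m$, this length tends to infinity with $k$, so for large $k$ valid pairs $(s, t)$ abound, and in the degenerate case $c = d$ the interval is long enough to arrange $s \neq t$. This produces three distinct elements $x, y, k$ of colour $1$ satisfying $x + y = p(k)$, i.e.\ a monochromatic solution with $z = k$.

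Different switches produce solutions with different $z$-coordinates, hence distinct triples, so if infinitely many $k_i$ had $|A_{k_i}| < m(k_i)/2$, we would obtain infinitely many distinct monochromatic solutions, contradicting the hypothesis of Theorem~\ref{thm-char}. Consequently only finitely many switches violate the dichotomy, and after discarding them $\{k_i\}$ is still infinite and the claim holds on every remaining switch. The only routine obstacle I anticipate is confirming $x \neq y$ when $c = d$, but this reduces to a length-$\Theta(p(k)/m(k))$ interval avoiding a single midpoint, which is immediate for all sufficiently large $k$.
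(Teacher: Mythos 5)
Your proof is correct and follows essentially the same route as the paper's: the pigeonhole step handles $|A_k|>m(k)/2$, and the heart of the argument is that $|A_k|<m(k)/2$ forces a pair of residues with small $\beta$-value summing to $p(k)\pmod{m(k)}$, from which a monochromatic solution with $z=k$ is built, so only finitely many switches can be of this type. The paper packages this as a pairing of $j$ with $p(k)-j$ (showing first that $\beta(j)+\beta(p(k)-j)>2p(k)/3$ for all $j$ fails only finitely often, hence $|A_k|\ge m(k)/2$), whereas you extract the bad pair via pigeonhole on $B_k+B_k\ni p(k)$; these are the same idea in different clothing.
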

\begin{proof}
	Notice first that there are only finitely many switches $k$, for which there exists some $j\in \Z_{m(k)}$ with
	 $$\beta(j)+\beta(p(k)-j)\leq 2p(k)/3.$$ 
	Indeed, fix one such pair $k$ and $j$. For any $x\in H_j$ and $y\in H_{p(k)-j}$, we have $x+y\equiv p(k)\pmod{m(k)}$. By~\eqref{eq-pattern} and the definition of $\beta(\cdot)$, we see that every number $n$ with $n\equiv p(k)\pmod{m(k)}$ in 
	$$[\beta(j)+\beta(p(k)-j), 2p(k)-2m(k)]\supseteq [2p(k)/3, 2p(k)-2m(k)]$$ 
	can be written as the sum of two numbers of colour $\phi(k)=1$. We then obtain a monochromatic solution by choosing $x\in H_j$ and $y\in H_{p(k)-j}$ with $x+y=p(k)$ and setting $z=k$.
	
	We may now assume that for every switch $k$ in $\{k_i\}$, $\beta(j)+\beta(p(k)-j)> 2p(k)/3$ for every $j\in m(k)$, whence $|A_k|\geq m(k)/2$. If $A_k+A_k$ does not cover $\Z_{m(k)}$, say $x\in \Z_{m(k)}$ is not in $A_k+A_k$, then $A_k\cap (x-A_k)=\emptyset$, implying that $|A_k|=m(k)/2$ as desired.
\end{proof}

If the switch $k$ is bad, then by Lemma~\ref{lem-sumset-char}, there is an even $m'(k)\mid m(k)$, an $A_k'\subseteq \mathbb{Z}_{m'(k)}$ of size $m'(k)/2$ and an odd $\alpha_k\in\Z_{m'(k)}$ such that $A_k'+A_k'=\Z_{m'(k)}\setminus\{\alpha_k\}$, and 
$A_k+A_k$ covers all residue classes mod $m(k)$ except those in the mod $m'(k)$ residue class of $\alpha_k$. Therefore, by the definition of $A_k$, every integer 
$$n\in [2m(k), 2p(k)/3-2m(k)]$$ 
can be written as a sum of two numbers of colour $-\phi(k)=-1$ from two residue classes of $A_k$, except when $k$ is bad and $n\equiv \alpha_k \pmod{m'(k)}$.


We first deal with the case when there are finitely many bad switches $k_i$ with bounded $m'(k_i)$. By passing to a subsequence, while preserving that consecutive switches have opposite colours, we may assume that for every bad switch $k_i$, $m'(k_i)>M$, where $M$ is the constant from Lemma~\ref{lem-cong}. 

\medskip

\noindent\textbf{Case 1:} \emph{for every $k\in \{k_i\}$, if $k$ is bad, then $m'(k)>M$}.

\medskip

For each $i$, define
$$I_i:=\{z: p(z)\in [2m(k_i), 2p(k_i)/3-2m(k_i)], \mbox{ and } p(z)\not\equiv \alpha_{k_i}\pmod{m'(k_i)} ~\mbox{ if $k_i$ is bad}  \}.$$
By the discussion after Claim~\ref{cl-Ak} and the definition of $I_i$, we see that, for every $z\in I_i$, $p(z)$ can be written as a sum of two numbers of colour $-\phi(k_i)$. In other words, if $I_i$ is not monochromatic in colour $\phi(k_i)$, we will get a monochromatic solution. Since $\phi$ does not contain infinitely many monochromatic solutions, we may then assume that 
\begin{itemize}
	\item[$(\ast)$] \emph{for each $i$, $I_i$ is monochromatic in colour $\phi(k_i)$.}
\end{itemize}

By taking $k_1$ sufficiently large, we may also assume that the degree-$(d-1)$ polynomial $m(k)$ satisfies $m(k)=o(p(k))$, and that the degree-$d$ term in $p(k)$ dominates, i.e.~$p(k)-a_dk^d=o(a_dk^d)$. Thus, for some $c\approx (\frac{2}{3})^{1/d}\ge \frac{2}{3}$, the set $I_i$ contains the interval $[ck_i/4,ck_i]$, apart from the $p$-preimage of the residue class $\alpha_{k_i}\pmod{m'(k_i)}$, if $k_i$ is bad.

We next show that consecutive switches must be \emph{far apart}. Consider two consecutive switches  $k_i,k_{i+1}$. Suppose $k_i<k_{i+1}<2k_i$, then from the above discussion $I_{i}\cap I_{i+1}$ contains the interval $[ck_i/2,ck_i]$, apart from the $p$-preimage of the residue class $\alpha_{k_j}\pmod{m'(k_j)}$ if $k_j$ is bad for $j\in\{i,i+1\}$. As $k_i\ge 10N$, Lemma~\ref{lem-cong} guarantees a number $z\in [ck_i/2,ck_i]$ such that $p(z)\not\equiv \alpha_{k_j} \pmod{m'(k_j)}$ for any $j\in \{i,i+1\}$ such that $k_j$ is bad. In other words, $I_i\cap I_{i+1}$ is non-empty, and by~$(\ast)$, $z\in I_i\cap I_{i+1}$ must have the same colour as $k_i$ and also $k_{i+1}$.  However, $k_i$ and $k_{i+1}$ are two consecutive switches with opposite colours, $\phi(k_i)\ne \phi(k_{i+1})$, a contradiction.

Hence, $2k_i\leq k_{i+1}$ for any $i$. Take now three consecutive switches, say without loss of generality $k_1<k_2<k_3$ with  $\phi(k_2)=1$. So $(k_1,k_2]$ is of colour 1; while $(k_2,k_3]$ is of colour $-1$. Consider $z$ with 
$$p(z)\approx 1.6k_2,$$
and so $z\approx (\frac{1.6k_2}{a_d})^{1/d}$.

If $\phi(z)=-1=\phi(k_3)$, then there is an $x\in [p(z)-k_2-1]$ of colour $\phi(z)=-1$. Note that  
$$[p(z)-k_2-1]\supseteq [0.59k_2]$$ 
contains a long interval, so such an $x$ must exist. Then $y=p(z)-x\in (k_2,k_3]$ is also of colour $-1$, and $x$, $y=p(z)-x$, $z$ form a monochromatic solution.

If $\phi(z)=1=\phi(k_2)$, then 
$$\phi(z)=\phi\left(\left\lfloor \frac{p(z)}{2} \right\rfloor\right)=\phi\left(\left\lceil \frac{p(z)}{2} \right\rceil\right)\quad \mbox{as}\quad \left\lfloor \frac{p(z)}{2} \right\rfloor, \left\lceil \frac{p(z)}{2} \right\rceil\approx 0.8k_2\in (k_1,k_2]$$
 due to $k_2\ge 2k_1$. So $x=\lfloor \frac{p(z)}{2} \rfloor,y=\lceil \frac{p(z)}{2} \rceil,z$ form a monochromatic solution.

We thus obtain infinitely many monochromatic solutions from either pairs of switches that are close to each other or triples of switches that are pairwise far apart.

\medskip

\noindent\textbf{Case 2:}  \emph{there are  infinitely many bad $k_i$ with $m'(k_i)\le M$.} 

\medskip

By passing to a subsequence, we may assume in this case that for every $i\in\N$, 
$$\phi(k_i)=1, \quad m'(k_i)=:m', \quad A_{k_i}'=:A', \quad \alpha_{k_i}=:\alpha$$ 
for some $m'\le M$, $A'\subseteq \Z_{m'}$ and odd $\alpha\in\Z_{m'}$.

Since $p(k_i)\rightarrow \infty$ as $k_i\rightarrow\infty$, $\beta(a)=\infty$ for every $a\in A'$. In other words, for any $n\in\N$, if $n\in A' \pmod{ m'}$, then $\phi(n)=-1$.

On the other hand, for each $b\in B':=\Z_{m'}\setminus A'$, note that the colours in the residue class $n\equiv b \pmod{m'}$ change before $p(k_1)/3$ from $-1$ to $1$ and remains positive as $p(k_i)\rightarrow \infty$. In other words, for any integer $n>p(k_1)/3$, if $n\in B' \pmod{m'}$, then $\phi(n)=1$. To show that $\phi$ is periodic, we still need to show small values of $n\in B'\pmod{m'}$ also get colour $1$. For this, we need to first show that 
$$p(z)\equiv \alpha \pmod{m'},$$ 
for every $z\in\N$.

Suppose to the contrary that for some $z\in\N$, $p(z)\not\equiv \alpha\pmod{m'}$. Note that $p(z)$ is periodic mod $m'$ with period $m'$, that is, for any $\ell\in \N$, 
$$p(z+\ell\cdot m')\equiv p(z)\pmod{m'}.$$ 
Therefore, from the existence of a number $z$ with $p(z)$ not congruent to $\alpha$ mod $m'$, it follows that there are infinitely many $z$ with $p(z)$ not congruent to $\alpha$ mod $m'$. For each such $z$ with $p(z)>2p(k_1)/3$, we have 
$$p(z)\in A'+A'=B'+B' \pmod{m'}$$ 
due to Lemma~\ref{lem-sumset-char}. We then get a monochromatic solution by writing $p(z)$ as a sum of two numbers of colour $-1$ (or colour $1$ resp.) from residue classes in $A'\pmod{m'}$ (or in $B'$ resp.), and clearly distinct choices of $z$ yield distinct solutions. We then get infinitely many monochromatic solutions, a contradiction.

Suppose now that for some $n\in B'\pmod{m'}$, $\phi(n)=-1$. By Lemma~\ref{lem-sumset-char}, 
$$\alpha\in n+A'\pmod{m'}.$$ 
Then for each of the infinitely many choice of $z$ with $p(z)\equiv \alpha\pmod{m'}$, we get a monochromatic solution in colour $-1$ by setting 
$$x=n\quad \mbox{and } \quad y=p(z)-n\in A'\pmod{m'},$$ 
a contradiction.

This completes the proof of Theorem~\ref{thm-char}.



\section{Number of solutions}\label{sec-numberofsol}
In this section, we prove Theorem~\ref{thm-numberofsol}. In order to count the number of solutions, we need a quantitative strengthening of Lemma~\ref{lem-cong}, see Lemma~\ref{henseltype}. Throughout this section the constants hidden in the $o,O,\Omega$ notions depend only on the polynomial $p$.
\subsection{Roots of polynomials modulo $m$}
We will need a lemma which shows that a polynomial ``usually'' does not have ``too many'' roots modulo $m$. When $m=q$ is a prime, then this clearly holds, as the number of roots is at most the degree of the polynomial. For composite numbers the situation is not so clear. For instance, the polynomial $p(z)=z^2$ has $q$ roots modulo $m=q^2$ (here $q$ is a prime), that is, the number of roots might be as large as $\sqrt{m}$ for infinitely many values of $m$. However, one can easily check that for any $q\nmid c$ the polynomial $p(z)=z^2+c$ can have at most 2 roots modulo $q^2$. 

Our next lemma, extending Lemma~\ref{lem-cong} quantitatively, states that for any polynomial $p(z)$ and interval $I$, it is possible to choose a ``not too sparse'' subset $Z$ of $I$ such that $p(z)\equiv c\pmod{m}$ can have only $m^{o(1)}$ solutions with $z\in Z$. Note that this also implies that the image $p(I)$ (as a subset of $\mathbb{Z}_m$) must be ``large''. 

\begin{lemma}\label{henseltype}
Let $p(z)=a_dz^d+\dots+a_1z+a_0\in\mathbb{Z}[z]$ with $d\ge 1$ and $a_d>0$. There exists $m_0$ such that the following holds for any $m\ge m_0$ and any interval $I\subseteq \N$ of length $O(m)$. There exists a subset
$Z:=Z(I,m)\subseteq I$ of size at least 
$$|Z|=\Omega\left(\frac{|I|}{(\log\log m)^{d-1}}\right)$$
such that for every $c\in \Z_m$, the congruence $p(z)\equiv c\pmod {m}$ has at most 
$$e^{O(\frac{\log m}{\log\log m})}$$
solutions with $z\in Z$. Furthermore, for any $m^*|m$ with $m^*\ge m_0$ and $|I|=O(m^*)$, 
$$Z(I, m)\subseteq Z(I, m^*).$$
\end{lemma}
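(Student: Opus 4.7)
The plan is to construct $Z$ by discarding those $z \in I$ whose derivative $p'(z)$ has an atypically large ``$m$-smooth'' part, namely for which $G_m(z) := \prod_{q \mid m} q^{v_q(p'(z))}$ is large. The motivating observation is a Hensel-type inequality. For $z, z' \in I$ with $p(z) \equiv p(z') \pmod{m}$, setting $u = z' - z$ and Taylor-expanding gives
\[
0 \equiv u \, p'(z) + u^2 p_2(z) + \dots + u^d p_d(z) \pmod{m},
\]
where $p_k(z) = p^{(k)}(z)/k!$. For each prime $q \mid m$ with $e_q = v_q(m)$, comparing $q$-adic valuations term by term (in the absence of cancellation) forces $v_q(u) \geq e_q - v_q(p'(z))$. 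Combining across all $q \mid m$, the integer $u$ must be a multiple of $m/G_m(z)$, so the number of $z' \in I$ with $p(z') \equiv p(z) \pmod m$ is at most $O(|I| \, G_m(z)/m) = O(G_m(z))$, thanks to $|I| = O(m)$.

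To bound $G_m(z)$ on a large subset $Z \subseteq I$, the next step is to estimate the first moment
\[
\sum_{z \in I} \log G_m(z) = \sum_{q \mid m} \log q \sum_{t \geq 1} \#\{z \in I : q^t \mid p'(z)\}.
\]
For each $q$, the congruence $p'(z) \equiv 0 \pmod{q^t}$ defines a set of roots whose count in an interval is controlled by Hensel-style lifting and the degree of $p'$. Combining these per-$(q,t)$ bounds with the Mertens-type estimate $\sum_{q \mid m} \log q/(q-1) = O(\log\log m)$ (which holds uniformly in $m$, the worst case being primorial-like $m$), one obtains an average bound $E[\log G_m] = O(\log\log m)$. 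Markov's inequality then produces a large $Z \subseteq I$ on which $G_m(z) \leq (\log m)^{O(1)} \leq e^{O(\log m/\log\log m)}$, giving the required solution bound.

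The monotonicity $Z(I, m) \subseteq Z(I, m^*)$ for $m^* \mid m$ is arranged by choosing a prime-by-prime construction: $Z(I, m) = \bigcap_{q \mid m} G_q(I, v_q(m))$, where $G_q(I, e) \subseteq I$ depends only on $q$ and $e$ and shrinks as $e$ grows (higher powers of $q$ in the modulus impose stricter conditions on $v_q(p'(z))$). Under such a definition, passing from $m$ to $m^* \mid m$ both restricts to a subset of the intersection's primes and loosens each remaining condition, so $Z(I, m) \subseteq Z(I, m^*)$ is automatic.

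The subtlest point is the non-cancellation assumption underlying $v_q(u) \geq e_q - v_q(p'(z))$. For some $z$, several terms $u^k p_k(z)$ in the Taylor expansion may share the same $q$-adic valuation and cancel, weakening this inequality. Ruling this out appears to require imposing analogous conditions on $v_q(p_k(z))$ for each $k = 2, \dots, d-1$, and each extra Markov-type pruning costs one factor of $\log\log m$ in the size of $Z$. This mirrors the extra care required when $p'$ is non-separable modulo some prime $q$ (e.g., $p(z) = (z-a)^d$ with $p'(z) = d(z-a)^{d-1}$), and is where I expect the $(\log\log m)^{d-1}$ factor in the size bound to enter.
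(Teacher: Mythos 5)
Your starting idea---control the $q$-adic valuations $v_q(p'(z))$ for all $q\mid m$ and lose only $e^{O(\log m/\log\log m)}$ solutions---is the same basic mechanism the paper uses, but the concrete argument you give for the solution bound has a genuine gap, and your proposed patch for it does not close it.

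The problematic step is the deduction that for $p(z')\equiv p(z)\pmod{m}$ with $u=z'-z$, one has $v_q(u)\ge e_q-v_q(p'(z))$ for each prime $q\mid m$, and hence $u$ is a multiple of $m/G_m(z)$. This is false whenever $p$ has several distinct roots modulo $q$ hitting the same value, which is the generic situation for $d\ge 2$. For instance take $p(z)=z(z-1)(z-2)+c$ and $q=7$: then $p(0)\equiv p(1)\equiv p(2)\pmod 7$, so for $z=0$ one may take $z'=1$, giving $u=1$ with $v_7(u)=0$, even though $p'(0)=2$ has $v_7(p'(0))=0$ and the claimed inequality would force $v_7(u)\ge e_7\ge 1$. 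The issue is not an exotic ``non-cancellation'' failure that can be sieved away by conditioning on $v_q(p_k(z))$; it is intrinsic to a degree-$d$ polynomial being up to $d$-to-one modulo every prime. Your closing paragraph identifies the right worry but misdiagnoses where it comes from, and the suggested fix (further prunings on $v_q(p_k)$, each ``costing a $\log\log m$'') would not remove these extra preimages.

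The paper handles this cleanly by counting solutions one prime power at a time via Hensel lifting, rather than by a valuation inequality for the global difference $u$. For $q\ge d$, a root $z_0$ of $p(z)\equiv c\pmod{q^\gamma}$ with $p'(z_0)\not\equiv 0\pmod{q}$ lifts to a unique root modulo $q^{\gamma+1}$, because the higher-order Taylor terms $k^2q^{2\gamma}p_2(z_0)+\dotsb$ automatically vanish modulo $q^{\gamma+1}$; so the number of roots in the sieved set $S_q$ is at most the $\le d$ base roots modulo $q$. Accumulating the factor $d$ over all $q\mid m$ yields $d^{\omega(m)}=e^{O(\log m/\log\log m)}$, which is precisely the target bound --- it is this product over primes, not the size of $G_m(z)$, that produces the $e^{O(\log m/\log\log m)}$. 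For the finitely many primes $q<d$ (where $p'$ may vanish identically mod $q$), the paper restricts $z$ to a fixed residue class modulo $q^{\tau}$ on which $p'$ is a unit mod $q^{\tau}$ and lifts in steps of size $\lceil\sqrt{\alpha}\rceil$, giving $q^{O(\sqrt{\alpha})}=e^{O(\sqrt{\log m})}$ extra solutions, again within budget.

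Two smaller remarks. First, the $(\log\log m)^{d-1}$ loss in $|Z|$ does not arise from iterated Markov prunings; in the paper it is exactly the Mertens product $\prod_{q\mid m,\,q\ge d}(1-(d-1)/q)\gg(\log\log m)^{-(d-1)}$ coming from deleting $d-1$ residue classes per large prime. Second, your global-threshold construction $Z=\{z:G_m(z)\le T\}$ does in fact satisfy the nesting $Z(I,m)\subseteq Z(I,m^*)$ for $m^*\mid m$, since $G_{m^*}(z)\le G_m(z)$; the prime-by-prime reformulation you add at the end is not needed for that, although it is what the paper actually uses.
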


\begin{proof}
We may assume that $I$ is of length $e^{\Omega(\frac{\log m}{\log\log m})}$, otherwise we can simply take $Z=I$. We first construct a subset $Z:=Z(I,m)\subseteq I$ explicitly and then show that it has the desired properties.

\smallskip

\noindent\emph{Construction.} For every prime $q\mid m$, we will select a subset $S_q\subseteq \mathbb{Z}$ and the desired set $Z$ will be their common intersection with the interval $I$, i.e.~$Z:=I\cap \bigcap\limits_{q\mid m}S_q$.
To define $S_q$, two cases are distinguished based on the size of $q$: either $q\leq d-1$ or $d\leq q$.

\medskip

\noindent\emph{Case 1.} $q\leq d-1$. Choose an integer $\tau>0$ in such a way that the derivative $p'$ is not constant 0 modulo $q^\tau$, say 
$$p'(z_q)\not\equiv 0\pmod{q^\tau}.$$ 
Such a $z_q$ exists as $p'$ is a degree-$(d-1)$ polynomial with positive leading coefficient, $a_d{d\choose 1}>0$, and will eventually take positive value, say $p'(z_{q})>0$. Then any $\tau$ with $q^\tau>p'(z_{q})$ will do.

As there are only finitely many primes up to $d-1$, it is possible to choose the same $\tau=\tau(p)$ for all small primes $q\mid m, q\leq d-1$. Let $S_q$ contain those integers that lie in the residue class of $z_q \pmod{q^\tau}$.

\medskip

\noindent\emph{Case 2.} $d\leq q$. Note first that the congruence $p'(z)\equiv 0\pmod{q}$ has at most $d-1$ solutions. Let us choose exactly $d-1$ residue classes containing all these solutions, and let $S_q$ be the set consisting of all integers \emph{outside} of these $d-1$ residue classes$\pmod{q}$, i.e.~for every $z\in S_q$, 
$$p'(z)\not\equiv 0\pmod{q}.$$ 
The assumption that $S_q$ avoids  {\it exactly} $d-1$ residue classes will slightly simplify some formulas later.

Finally, we set 
$$Z:=Z(I, m)=I\cap \bigcap\limits_{q\mid m}S_q.$$
By construction, the furthermore part is clear.

We shall now prove that $Z\gg \frac{|I|}{(\log\log m)^{d-1}}$ and that for any $c$, $p(z)\equiv c\pmod {m}$ has at most $e^{O(\log m /\log\log m)}$  solutions in $Z$.


We start with showing that $Z$ is large.
Let 
$$J:=I\cap \bigcap\limits_{q\mid m,q\leq d-1}S_q.$$ Note that $J$ contains those elements of $I$ that lie in a specific residue class modulo $\prod\limits_{q\mid m, q\leq d-1} q^\tau$, thus 
\begin{equation}\label{eq-J}
	|J|\geq \frac{|I|}{d^{d\tau }}.
\end{equation}
 In other words, $J$ is an arithmetic progression with common difference  $\prod\limits_{q\mid m,q\leq d-1} q^\tau$, containing a positive portion of the elements of $I$.

As a next step, we estimate the size of $J\cap \bigcap\limits_{q\mid m, d\leq q} S_q=Z$ with the help of the inclusion-exclusion formula.

Let 
$$Q:=\{q:~d\leq q,~q\mid m\}$$ 
be the set of large prime divisors. For a subset $Q'\subseteq Q$, let $a(Q')$ be the number of those elements of $J$ that lie {\it outside} of $S_q$ for every $q\in Q'$:
$$a(Q'):=|J\setminus \bigcup\limits_{q\in Q'} S_q|.$$
Since $J$ is an arithmetic progression with common difference relatively prime to $\prod\limits_{q\in Q'}q$, any $\prod\limits_{q\in Q'}q$ consecutive elements of $J$ form a complete system of residues modulo $\prod\limits_{q\in Q'}q$. From such a system, by the definition of $S_q$ for $q\ge d$, exactly $\prod\limits_{q\in Q'}\frac{d-1}{q}$ proportion of the elements lie in $\bigcap\limits_{q\in Q'} \overline{S_q}$. Since  $J$ contains at least $\frac{|J|}{\prod\limits_{q\in Q'}q}-1$ pairwise disjoint such ``intervals'' and can be covered by at most $\frac{|J|}{\prod\limits_{q\in Q'}q}+1$ such intervals, we have
$$a(Q')=|J|\prod\limits_{q\in Q'}\frac{d-1}{q}+ b(Q'),$$
where 
$$|b(Q')|\le \prod\limits_{q\in Q'}q\cdot \prod\limits_{q\in Q'}\frac{d-1}{q}= (d-1)^{|Q'|}.$$

According to the inclusion-exclusion principle, we have
$$|Z|=\sum\limits_{Q'\subseteq Q}(-1)^{|Q'|}a(Q')=|J|\prod\limits_{q\mid m,d\leq q}\left(1-\frac{d-1}{q}\right)+ \sum\limits_{Q'\subseteq Q}(-1)^{|Q'|}b(Q').$$
Note that 
$$|\sum\limits_{Q'\subseteq Q}(-1)^{|Q'|}b(Q')|\leq \sum\limits_{Q'\subseteq Q}|b(Q')|\leq \sum\limits_{Q'\subseteq Q}(d-1)^{|Q'|} = d^{|Q|}\leq d^{\omega(m)}=e^{O(\frac{\log m}{\log\log m})},$$ 
where the last estimate follows from the fact that the number of distinct prime divisors satisfies $\omega(m)=O(\frac{\log m}{\log\log m})$. Using Mertens's estimate~\cite{Mer}, we obtain that
$$
|Z|\gg |J|\prod\limits_{q\mid m,d\leq q}\left(1-\frac{d-1}{q}\right)-e^{O(\frac{\log m}{\log\log m})}\gg \frac{|J|}{(\log\log m)^{d-1}}-e^{O(\frac{\log m}{\log\log m})}\gg\frac{|J|}{(\log\log m)^{d-1}}.
$$

Therefore, together with~\eqref{eq-J}, we have
$$|Z|\gg  \frac{|I|}{(\log\log m)^{d-1}}.$$

We are left to give an upper bound for the number of solutions to $p(z)\equiv c\pmod{m}$ with $z\in Z$. By considering instead $p(z)-c$, we may assume $c=0$ in what follows. Note also that we used the function $p'$ to construct the set $Z$  and so the constant term in $p$ did not play any role.

According to the Chinese remainder theorem, it suffices to give upper bounds for the number of solutions to 
$$p(z)\equiv 0\pmod {q^{\alpha}}$$ 
with $z\in S_q$ for every $q\mid m$, where $\alpha=\alpha(q)$ is the exponent of $q$ in the canonical form of $m$.

First, let $q\leq d-1$. We claim that if $z\in S_q$, then the congruence $p(z)\equiv 0\pmod{q^{\alpha}}$ has at most $q^{O(\sqrt{\alpha})}$ solutions. 
Let 
$$\beta:=\lceil \sqrt{\alpha} \rceil,$$ 
so $\beta^2\ge \alpha$ and it suffices to bound the number of solutions to $p(z)\equiv 0\pmod{q^{\beta^2}}$. 

Clearly, $p(z)\equiv 0 \pmod{q^\beta}$ has at most $q^\beta$ solutions. Note that if $p(z)\equiv 0 \pmod{q^{j\beta}}$, then $p(z)\equiv 0 \pmod{q^{(j-1)\beta}}$. We can write $$z=z_0+kq^{(j-1)\beta},$$
where $p(z_0)\equiv 0 \pmod{q^{(j-1)\beta}}$. Since $$p(z)=p(z_0+kq^{(j-1)\beta})\equiv p(z_0)+kq^{(j-1)\beta}\cdot p'(z_0)\pmod {q^{j\beta}},$$
if both $k$ and $k'$ satisfy the above congruence, then 
$$(k-k')\cdot  p'(z_0)\equiv 0\pmod{q^{\beta}}.$$
Using that $p'(z_0)\equiv p'(z)\not\equiv 0\pmod{q^\tau}$ due to the construction of $S_q$, we see that
$$k-k'\equiv 0\pmod{q^{\beta-\tau+1}}.$$ 
That is, the residue class $k\pmod{q^{\beta-\tau+1}}$ is uniquely determined. Hence, from each solution $z_0\pmod{q^{(j-1)\beta}}$ we get at most $q^{\tau-1}$ solutions $z\pmod{q^{j\beta}}$. Therefore, the number of solutions is at most $$q^{\beta+(\beta-1)(\tau-1)}=q^{O(\sqrt{\alpha})}.$$

Secondly, let $d\leq q$. We claim that if $z\in S_q$, then $p(z)\equiv 0\pmod{q^{\alpha}}$ has at most $d$ solutions with $z\in S_q$. We use induction on $\alpha$. Clearly the congruence $p(z)\equiv 0\pmod {q}$ has at most $d$ solutions. Now assume that $p(z)\equiv 0 \pmod {q^\gamma}$ has at most $d$ solutions for some $\gamma\ge 1$. If $p(z)\equiv 0\pmod {q^{\gamma+1}}$, then $z$ can be written as 
$$z=z_0+kq^\gamma,$$ 
where $p(z_0)\equiv 0\pmod{q^\gamma}$. Similarly, since 
$$p(z)\equiv p(z_0)+kq^\gamma\cdot  p'(z_0)\pmod{q^{\gamma+1}}$$ 
and $p'(z_0)\equiv p'(z)\not\equiv 0\pmod q$, the residue class $k\pmod {q}$ is uniquely determined. Therefore, from each solution of  $p(z)\equiv 0\pmod {q^{\gamma}}$ we get exactly one solution of $p(z)\equiv 0\pmod {p^{\gamma+1}}$. Hence, by induction, $p(z)\equiv 0 \pmod {q^{\alpha}}$ has at most $d$ solutions. 

We therefore obtain that the number of solutions to $p(z)\equiv 0\pmod{m}$ with $z\in Z$ is at most 
$$\left(\prod\limits_{q\mid m,q\leq d-1} q^{O(\sqrt{\alpha})}\right)\cdot \left(\prod\limits_{q\mid m,q\ge d} d\right)=e^{O(\sqrt{\log m})}\cdot d^{\omega(m)}=e^{O(\frac{\log m}{\log\log m})},$$
where the first estimate follows from the fact that $\alpha=O(\log m)$. This completes the proof of the lemma.
\end{proof}

We are now ready to prove Theorem~\ref{thm-numberofsol}. 
\subsection{Proof of Theorem~\ref{thm-numberofsol}}

We start with constructing the colouring for the second (easier) statement.

\subsubsection{Construction.} Let $n_2\leq n$ be the smallest positive integer such that $p(n_2)>2n$. Similarly, let $n_1$ be the smallest positive integer such that $p(n_1)>2(n_2-1)$. Recall that if $n$ is sufficiently large, then $p$ is strictly increasing on $[n_1,n]$ and $n_2\approx (2n/a_d)^{1/d}$, $n_1\approx (2n_2/a_d)^{1/d}$, and so $$n_1\approx (2/a_d)^{(d+1)/d^2}n^{1/d^2}.$$ 

Colour $[n_1-1]\cup [n_2,n]$ with colour 1 and $[n_1,n_2)$ with colour $-1$. Since $p(n_1)>2(n_2-1)$, any monochromatic solution must be in colour 1. Similarly, as $p(n_2)>2n$, if $x+y=p(z)$, then $z\in [n_1-1]$. Moreover, the minimality of $n_1$ implies that $p(z)\le 2(n_2-1)$, whence 
$$\min\{x,y\}\in [n_1-1].$$ 
Hence, the number of monochromatic solutions is at most $2n_1^2=O(n^{2/d^2})$.

\smallskip


We now continue with the lower bound on the number of monochromatic solutions. The linear case $d=1$ has already been proven by Robertson and Zeilberger~\cite{RZ98} and independently by Schoen~\cite{Sch99}. We assume now $d\ge 2$ and consider non-linear polynomials over $\Z$.

Recall the definition of a switch in Section~\ref{subsec-monotone}. We distinguish two types of switches. We say that a switch $k$ is \emph{isolated}, if the intervals $[k/2,k]$ and $[k+1,2k]$ are monochromatic (in different colours), otherwise, the switch is {\it non-isolated}. We will use different strategies to find monochromatic solutions for isolated and non-isolated switches.

\subsubsection{Isolated switches.} Consider a large isolated switch $k$. Without loss of generality we may assume that $[k+1,2k]$ is coloured $-1$ and $[0.5k,k]$ is coloured $+1$. 
Let
$$I:=[(1.51k/a_d)^{1/d},(1.59k/a_d)^{1/d}],$$
and denote by $I^+\subseteq I$ (resp. $I^-$) all elements of colour 1 (resp. $-1$) in $I$. By definition, for every $z\in I$, we have 
$$1.5k\le p(z)\le 1.6k.$$

\smallskip

\noindent{\it Case 1. $|I^+|=\Omega(k^{1/d})$.} Then for every $z\in I^+$ and every $x\in (0.8k,0.9k)$, we have $p(z)-x\in (0.6k,0.8k)$. Hence, the triple $\{x,y=p(z)-x,z\}$ is a monochromatic solution, yielding a total of $\Omega(k^{1+1/d})$ monochromatic solutions.

\smallskip

\noindent{\it Case 2. $|I^-|=\Omega(k^{1/d})$.} As $I^-\subseteq [0.4k]$, there are $\Omega(k^{1/d})$ many $x\in[0.4k]$ with colour $-1$. For every $z\in I^-$ and every $x\in [0.4k]$ with colour $-1$, $p(z)-x\in [1.1k, 1.6k]$. Therefore,
$\{x,y=p(z)-x,z\}$ is a monochromatic solution, yielding a total of $\Omega(k^{2/d})$ monochromatic solutions.

Therefore, the number of monochromatic solutions in either case is at least $\Omega(k^{2/d})$. 

\subsubsection{Non-isolated switches.} If $k$ is a non-isolated switch, then there is another switch in $[k/2,2k]$, thus assume that $k_1<k_2$ are two consecutive switches such that $k_2<2k_1$. 

Let $k\in\{k_1,k_2\}$ and assume without loss of generality that $\phi(k)=1$. Let again $m(k):=p(k+1)-p(k)$ and $R:=R_k\subseteq \Z_{m(k)}$ be the union of all non-monotone residue classes (recall the monotonicity defined in Section~\ref{subsec-monotone}). If $|R|\ge \frac{k}{e^{c(\log k/\log\log k)}}$, for some $c>0$ to be determined later, then by Lemma~\ref{lem-monotone}, we get at least 
$|R|\ge \frac{k}{e^{c(\log k/\log\log k)}}=k^{1-o(1)}$
monochromatic solutions. We may then assume that
 $$|R|\le \frac{k}{e^{c(\log k/\log\log k)}}$$ 
and so there are at least 
$m(k)-\frac{k}{e^{c(\log k/\log\log k)}}$ 
monotone residue classes. By definition, each  monotone class $j$ is such that $\phi\big|_{H_j}=\{-1,-1,\dots,-1,1,1,\dots,1\}$. 

Define
$$R':=\{j\in \Z_{m(k)}:~ p(k)-j\in R\}.$$
Let $A=A_k\subseteq \Z_{m(k)}\setminus (R\cup R')$ consist of all the monotone residue classes $j$ such that the colour is $-1$ within $H_j$ up to $p(k)/3$.  Observe crucially that

\medskip

\begin{itemize}
	\item[$(\dagger)$]\label{item-dagger} \emph{for any $j\notin R\cup R'$, if neither $j$ nor $p(k)-j\pmod{m(k)}$ belongs to $A$, then we get $\Omega(k)$ monochromatic solutions with $z=k$, $x\in H_j,y\in H_{p(k)-j}$.}
\end{itemize}

\medskip

\noindent Indeed, for every $x\in H_j\cap (p(k)/3,2p(k)/3)$ the triple $\{x,y=p(k)-x,z=k\}$ is a monochromatic solution.

We may then assume that for every $j\notin R\cup R'$, either $j$ or $p(k)-j\pmod{m(k)}$ belongs to $A$, and so 
$$|A|\geq |\Z_{m(k)}\setminus (R\cup R')|/2\ge m(k)/2-\frac{k}{e^{c(\log k/\log\log k)}}.$$

We claim that $A+A$ covers almost all the residue classes.
\begin{claim}\label{cl-goodclasses}
	$|A+A|\geq m(k)- \frac{6k}{e^{c(\log k/\log\log k)}}.$
\end{claim}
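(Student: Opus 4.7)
The main tool is Kneser's theorem applied to $A+A$ in $\Z_{m(k)}$, coupled with the structural stability result Lemma~\ref{lem-sumset-char} and the concentration bound Lemma~\ref{henseltype}. Write $\epsilon := k/e^{c(\log k/\log\log k)}$, so that the preceding paragraph gives $|A| \ge m(k)/2 - \epsilon$, and let $H$ denote the stabilizer of $A+A$ with $h:=|H|$. Kneser's theorem yields
\[
|A+A| \;\ge\; 2|A+H|-h \;\ge\; 2|A|-h \;\ge\; m(k) - 2\epsilon - h,
\]
so the target $|A+A| \ge m(k) - 6\epsilon$ holds immediately when $h \le 4\epsilon$. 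The bulk of the work treats the large-stabilizer case $h > 4\epsilon$.

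Set $\bar m := m(k)/h$, $K := \Z_{m(k)}/H \cong \Z_{\bar m}$, and let $\bar A := \varphi(A)$ under the projection $\varphi$. Since $|A|\le h|\bar A|$ we have $|\bar A| \ge \bar m/2 - \epsilon/h > \bar m/2 - 1/4$. If $|\bar A| > \bar m/2$, pigeonhole in the cyclic group $K$ gives $\bar A + \bar A = K$, hence $A+A = \Z_{m(k)}$ and we are done; otherwise integrality forces $\bar m$ even and $|\bar A| = \bar m/2$ exactly. By maximality of $H$, the stabilizer of $\bar A + \bar A$ inside $K$ is trivial, so Lemma~\ref{lem-sumset-char} applied to $\bar A \subseteq K$ gives either $\bar A + \bar A = K$ (done) or $\bar A + \bar A = K \setminus \{\alpha\}$ for a single odd $\alpha \in \Z_{\bar m}$. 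In this latter subcase $|A+A| = h(\bar m-1) = m(k) - h$, which still meets the claim provided $h \le 6\epsilon$.

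To exclude the remaining regime $h > 6\epsilon$, I would produce enough monochromatic solutions to contradict the running hypothesis that we have found fewer than the target count. If $\varphi(p(k)) \ne \alpha$, then $\varphi(p(k)) \in \bar A + \bar A$ gives a decomposition $p(k) = j_1 + j_2 \pmod{m(k)}$ with $j_1, j_2$ in $A$-residue classes, yielding $\Omega(k)$ monochromatic solutions with $z=k$ in the spirit of $(\dagger)$. So assume $\varphi(p(k)) = \alpha$. Then for every $z$ with $p(z) < 2p(k)/3$ and $\varphi(p(z)) \ne \alpha$, the decomposition $\varphi(p(z)) = \bar a_1 + \bar a_2$ inside $\bar A + \bar A$ lifts to $p(z) = x+y$ with both $x,y < p(k)/3$ in residue classes of $A$, so $\phi(x) = \phi(y) = -1$ gives a monochromatic solution. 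Lemma~\ref{henseltype} with modulus $\bar m$ (applied to sub-intervals of length $O(\bar m)$ tiling an interval of length $\Theta(k)$) yields a dense subset $Z$ on which $\varphi(p(z)) = \alpha$ has at most $e^{O(\log\bar m/\log\log\bar m)}$ solutions, leaving at least $k^{1-o(1)}$ valid $z$'s producing distinct monochromatic solutions. Choosing the constant $c$ in the definition of $\epsilon$ appropriately, this count exceeds the $n^{2/d^3-o(1)}$ target of Theorem~\ref{thm-numberofsol}, so $h > 6\epsilon$ cannot occur.

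The main obstacle is precisely the large-stabilizer branch: Kneser alone is insufficient when $h > 4\epsilon$, and the rigid structure supplied by Lemma~\ref{lem-sumset-char} (pinning $|\bar A| = \bar m/2$ and $\bar A + \bar A = K\setminus\{\alpha\}$) must be combined with the polynomial-concentration estimate Lemma~\ref{henseltype} to harvest enough distinct monochromatic solutions to rule out $h > 6\epsilon$. The delicate bookkeeping is verifying that the lifts of $\bar A$-decompositions land in $A$ (rather than the exceptional sliver $\varphi^{-1}(\bar A) \setminus A$ of size $\le \epsilon$) and that different $z$'s produce genuinely distinct monochromatic triples.
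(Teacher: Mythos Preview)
Your approach diverges from the paper's after the initial Kneser step, and there is a genuine gap in the large-stabiliser branch: you never control the colour of $z$. When you claim that $\varphi(p(z)) \ne \alpha$ yields $p(z) = x+y$ with $\phi(x)=\phi(y)=-1$ and hence ``a monochromatic solution'', you are tacitly assuming $\phi(z)=-1$; but within the claim you have only a single switch $k$, and nothing pins down $\phi(z)$ for $z$ ranging over an interval. (The two-switch device that handles $\phi(z)$ appears only \emph{after} Claim~\ref{cl-goodclasses} in the paper's argument, so it cannot be invoked here.) Your first sub-case has the same defect in a different guise: writing $p(k)\equiv j_1+j_2$ with $j_1,j_2\in A$ does \emph{not} produce monochromatic solutions with $z=k$, since $\phi(k)=+1$ whereas elements of $A$-classes are coloured $-1$ only below $p(k)/3$, and any decomposition $x+y=p(k)$ forces $\max\{x,y\}\ge p(k)/2$. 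This is exactly the opposite configuration to $(\dagger)$, which harvests colour-$(+1)$ solutions when both classes lie \emph{outside} $A$. The bookkeeping issue you flag at the end (lifting from $\bar A$ to $A$) is real but secondary to this.

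The paper's proof is shorter and avoids Lemmas~\ref{lem-sumset-char} and~\ref{henseltype} entirely. After Kneser, it bounds $|H|\le 4\epsilon$ by a direct coset-pairing argument: the standing assumption derived from $(\dagger)$ (for every $j\notin R\cup R'$, one of $j$ or $p(k)-j$ lies in $A$) forces each pair $\{C,\,p(k)-C\}$ of $H$-cosets to contribute at least one coset to $A+H$; the hypothesis $2\mid p(1)p(2)$ then supplies a self-paired coset (namely $p(k)/2+H$ or $p(k+1)/2+H$), which pushes $|A+H|>m(k)/2$ and contradicts the assumption that $A$ meets exactly half the cosets.
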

\begin{proof}
	By Kneser's theorem~\cite{Kneser}, $|A+A|\geq 2|A|-|H|$, where $H$ is the stabiliser of  $A+A$. It suffices to show that $|H|\le \frac{4k}{e^{c(\log k/\log\log k)}}$. If $A$ contains at least one element from more than half of the $H$-cosets, then $|A+H|>m(k)/2$ and by the pigeonhole principle $$\mathbb{Z}_{m(k)}=(A+H)+(A+H)=A+A,$$ 
	thus $|A+A|=m(k)$. We may then assume that $A$ contains elements from at most half of the $H$-cosets.
    
    Suppose the index of $H$, $m(k)/|H|$, is not even, then $|A|\leq m(k)/2-|H|/2$, thus $|H|\leq \frac{2k}{e^{c(\log k/\log\log k)}}$ as desired.
    
    We are left with the case that $m(k)/|H|$ is even and $A$ contains elements from exactly half of the $H$-cosets. In this case, $|A+H|=m(k)/2$. Consider a pair of elements $a$ and $b$ with $a+b\equiv p(k)\pmod{m(k)}$. Then there are at least 
    $|H|/2$ unordered pairs $(a',b')=(a+h,b-h)\in (a+H)\times (b+H)$ (it could be that $a+H=b+H$) satisfying 
    $$a'+b'=a+b\equiv p(k)\pmod{m(k)}.$$
    We may assume that $|H|> 4|R|$, otherwise $|H|\le \frac{4k}{e^{c(\log k/\log\log k)}}$ as desired. Then 
    $$|H|/2>|R\cup R'|,$$ 
    implying that one such pair $(a',b')=(a+h,b-h)$ consists of two monotone classes. Then by~$(\dagger)$, either $a+h$ or $b-h$ is in $A$. Consequently, either $a+h+H=a+H$ or $b-h+H=b+H$ is contained in $A+H$.
    
    We have observed that if $a+b\equiv p(k)\pmod{m(k)}$, then either $a+H$ or $b+H$ is contained in $A+H$. We can then pair up such $H$-cosets. As $2|p(1)p(2)$, either $p(k)$ or $p(k+1)=p(k)+m(k)$ is even, and so either $p(k)/2+H$ or $p(k+1)/2+H$ is paired up with itself. We then deduce that in fact $A+H$ contains more than half of the $H$-cosets, i.e.~$|A+H|>m(k)/2$, a contradiction. 
\end{proof}

We will use the following observation to find monochromatic solutions. If $z\in (0.1k,0.3k)$ is such that $\phi(z)=-\phi(k)=-1$ and $p(z)$ belongs to $A+A$ mod $m(k)$, say $p(z)\equiv a+a'\pmod{m(k)}$, then we can find $\Omega(k)$ monochromatic solutions to $x+y=p(z)$. Indeed, recall that for any $j\in A$, $H_j$ has an initial segment of colour $-1$ up to $p(k)/3$. For the above choice of $z$, we have 
$$((0.1)^d+o(1))p(k)\le p(z)\le p(k)/3.$$ 
Then for any $x\in H_a$ with $x<p(z)$, we get a monochromatic solution $\{x,y=p(z)-x,z\}$.

In the above observation, we require $z$ to have the opposite colour of $k$. To drop this requirement, we consider now 
$$z\in (0.1k_1,0.3k_1)\cap(0.1k_2,0.3k_2)=:I$$ 
such that $p(z)$ belongs to $A_{k_i}+A_{k_i}$ mod $m_i:=m(k_i)$ for each $i\in\{1,2\}$. Note that there exists $i^*\in \{1,2\}$ such that $\phi(k_{i^*})=-\phi(z)$ as $k_1,k_2$ are two consecutive switches having opposite colours. Then, for this choice of $z$, we can find $\Omega(k_{i^*})$ monochromatic solutions using $x,y$ from appropriate residue classes of $A_{i^*}$. 

Define for each $i\in\{1,2\}$,
$$X_{i}:=\mathbb{Z}_{m_i}\setminus (A_{k_i}+A_{k_i}),$$ 
we have by Claim~\ref{cl-goodclasses} that
$$|X_i|\leq \frac{6k_i}{e^{c(\log k_i/\log\log k_i)}}.$$

On the other hand, as $k_2<2k_1$, $I\supseteq (0.2k_1,0.3k_1)$ is an interval of length $\Theta(k_2)\ll k_2^{d-1}\ll m_1, m_2$. We can then apply Lemma~\ref{henseltype} for each $m\in \{m_1m_2, m_1, m_2\}$ to obtain sets $Z:=Z(I, m_1m_2)$, $Z_i:=Z(I, m_i)$, $i\in\{1,2\}$ such that
$$|Z|=\Omega\left(\frac{k_2}{(\log\log k_2)^{d-1}}\right), \quad \mbox{ and }\quad Z\subseteq Z_1\cap Z_2.$$
Since $Z\subseteq Z_i$ for each $i\in\{1,2\}$, at most 
$$(|X_1|+|X_2|)e^{O(\log k_2/\log\log k_2)}\le |Z|/2$$ 
elements of $Z$ are mapped to $X_1$ mod $m_1$ or to $X_2$ mod $m_2$, where the inequality follows from choosing $c$ large enough (but independent of $k_2$). 

Therefore, at least $|Z|/2$ elements of $Z\subseteq I$ are mapped out of $X_i$ mod $m_i$, for each $i\in\{1,2\}$. In other words, each such $z$ satisfies $p(z)\in A_{k_i}+A_{k_i}\pmod{m_i}$ for each $i\in\{1,2\}$. By the above observation, we get $\Omega(k)\cdot |Z|/2=\Omega\left(\frac{k^2}{(\log\log k)^{d-1}}\right)$ monochromatic solutions.

Hence, in the case of a non-isolated switch $k$, we get at least $k^{1-o(1)}$ monochromatic solutions. Note that in this case, the value of $z$ in the solutions we found can be as large as $k+1$. Since $p(z)\le p(k+1)\le n$ has to lie within the interval $[n]$, the non-isolated switch $k$ we consider should satisfy $k=O(n^{1/d})$.

\subsubsection{Putting things together.} To conclude the proof, take an interval 
$$J:=(c_1n^{1/d^2},c_2n^{1/d})$$ 
such that 
\begin{itemize}
\item $p(\cdot)$ is strictly increasing on $(c_1n^{1/d^2},\infty)$,
\item $p(c_1n^{1/d^2})<c_2n^{1/d}/10$,
\item $p(c_2n^{1/d})<n/10$.
\end{itemize} 
If $J$ is monochromatic, then we get  $\Omega(n^{1/d+1/d^2})$ monochromatic solutions by choosing $x=\Theta(n^{1/d})$ and $z=\Theta(n^{1/d^2})$. Otherwise, take a switch $k\in J$. This is either isolated or non-isolated, however, in both cases we get at least $$\min\{\Omega(k^{2/d}), k^{1-o(1)}\}=k^{2/d-o(1)}=n^{2/d^3-o(1)}$$ 
monochromatic solutions.

This completes the proof of Theorem~\ref{thm-numberofsol}.

\section{Concluding remarks}\label{sec-remark}
In this paper, we settle the Ramsey problem for $\{x,y,z:x+y=p(z)\}$ for all polynomials $p$ over $\Z$.

\subsection{Monochromatic solution in intervals}
The proof of Theorem~\ref{thm-numberofsol} yields also the following quantitative version.
\begin{theorem}\label{thm-interval}
	Let $p(z)=a_dz^d+\dots+a_1z+a_0\in\mathbb{Z}[z]$ with $d\ge 1$ and $a_d>0$ such that $2\mid p(1)p(2)$. Then for every 2-colouring of $[N,N^{d^3+o(1)}]$ with $N$ sufficiently large, there is a monochromatic solution to $x+y=p(z)$. Moreover, there exists some $c>0$ and a 2-colouring of $[N,cN^{d^2}]$ without monochromatic solutions of $x+y=p(z)$.
\end{theorem}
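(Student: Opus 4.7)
My plan is to adapt the proof of Theorem~\ref{thm-numberofsol} to the interval $[N,M]$ (with $M=N^{d^3+o(1)}$), tracking where the monochromatic solutions it produces actually lie, and to supply a direct two-block construction for the negative statement. Both the switch-based and the monochromatic-interval arguments predict the location of $x,y,z$ explicitly, so one only needs to select the relevant switch or interval inside the correct range of $[N,M]$.

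For the positive direction, fix constants $C_1,C_2>0$ depending on $p$ and let $L:=[N,C_1 M^{1/d}]$. First I would ask whether $L$ contains a \emph{useful} switch, meaning either (a) a non-isolated switch $k\ge 10N$ or (b) an isolated switch $k\ge C_2 N^d$. If so, I run the corresponding argument from the proof of Theorem~\ref{thm-numberofsol}: in case (a) the solutions satisfy $z\in(0.1k,0.3k)\subseteq[N,M]$, and since $k\ge 10N$ the set $H_a\cap[N,p(z)-N]$ still has $\Theta(k)$ elements so I can pick $x\ge N$ while $y=p(z)-x\le p(0.3k)=O(M)$; in case (b) the choice $C_2:=a_d/1.51$ forces $z\in[(1.51k/a_d)^{1/d},(1.59k/a_d)^{1/d}]\subseteq[N,M]$ and $x,y=\Theta(k)\in[N,M]$.

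If no useful switch exists, then every switch in $L$ with $k\ge 10N$ is isolated and $<C_2N^d$, so $L':=[C_2N^d,C_1M^{1/d}]$ contains no switch and is monochromatic. I would then pick any $z\in[C_2N^d,(2C_1M^{1/d}/a_d)^{1/d}]$ and set $x:=\lfloor p(z)/2\rfloor$, $y:=p(z)-x$; both land in $L'$ for $N$ sufficiently large, giving a monochromatic triple. The range of admissible $z$ is nonempty precisely when $C_2N^d\le(2C_1M^{1/d}/a_d)^{1/d}$, i.e.\ when $M\gtrsim N^{d^3}$, matching the hypothesis $M=N^{d^3+o(1)}$. For the construction, I would partition $[N,cN^{d^2}]$ into $[N,M_1)$ of colour $+1$ and $[M_1,cN^{d^2}]$ of colour $-1$, where $M_1\approx a_dN^d/5$ and $c>0$ is a small enough constant depending on $p$: a $+1$ triple is ruled out because $p(z)\ge p(N)\ge a_d N^d/2>2M_1>x+y$, and a $-1$ triple because $p(z)\ge p(M_1)\ge a_d M_1^d/2>2cN^{d^2}\ge x+y$ (lower-order terms in $p$ being absorbed into constants for $N$ large).

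The hard part is the monochromatic-interval case in the positive direction: without useful switches one is forced to operate inside an interval whose lower endpoint is $\Omega(N^d)$ (the largest tolerated isolated switch) and whose upper endpoint is $O(M^{1/d})$ (imposed by $p(z)\le 2M$). A self-contained solution needs the $d$-th root of the upper endpoint to exceed the lower endpoint, which forces $M\gtrsim N^{d^3}$; this asymmetry between $z$ needing to be ``large'' and $p(z)$ needing to be ``small enough to split'' is exactly what creates the gap between the construction exponent $d^2$ and the positive exponent $d^3$.
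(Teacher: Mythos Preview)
Your proposal is correct and matches the paper's own (implicit) approach: the paper gives no separate proof of Theorem~\ref{thm-interval}, only the remark that it follows from the proof of Theorem~\ref{thm-numberofsol}, and your case split into a useful switch versus a long monochromatic subinterval $L'$, together with the two-block construction, is precisely the adaptation needed, including the computation showing that a self-contained solution inside $L'$ forces $M\gtrsim N^{d^3}$. The one point worth making explicit is that the non-isolated switch argument of Section~\ref{sec-numberofsol} has earlier exit branches (a non-monotone class via Lemma~\ref{lem-monotone}, observation~$(\dagger)$, and the dichotomy inside Claim~\ref{cl-goodclasses}) before one ever reaches the ``final'' solutions with $z\in(0.1k,0.3k)$; those earlier solutions have $z\in\{k,k+1\}$ and $x,y$ of order $p(k)$, so they too land in $[N,M]$ once $k\ge 10N$ and the monotonicity test is restricted to $H_j\cap[N,p(k)-N]$.
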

We believe that in both Theorems~\ref{thm-numberofsol} and~\ref{thm-interval}, the $2$-colourings constructed give the optimal exponents, i.e.~$2/d^2$ and $d^2$ respectively. We put forward as an open problem to close the gaps in the exponents.

\subsection{Further directions}
One further direction is to investigate other linear forms $a_1x_1+\ldots+a_kx_k$ instead of $x+y$.

Another further direction of study is to consider functions $f(z)$ beyond polynomials: $x+y=f(z)$.

Consider the exponential function $f(z)=2^z$. We define a 2-colouring of $\mathbb{N}$ recursively as follows. Let $\phi(1)=\phi(2)=1,\phi(3)=-1$ and for $k\geq 2$ and $x\in [2^k,2^{k+1})$ let $\phi(x)=-\phi(k+1)$. Assume that $x+y=2^z$. If $\max\{x,y\}\in [2^k,2^{k+1})$, then $2^k+1<x+y<2^{k+2}$, thus $z=k+1$. If $\max\{x,y\}\geq 4$, then $\max\{x,y\}$ and $z$ have different colours. If $\max\{x,y\}\leq 3$, then we get only one monochromatic solution, the trivial one: $x=y=z=1$.

Our approach can be applied to ``nice'' functions that grow sub-exponentially, up to $f(z)\sim e^{(\log z)^{2}}$.
\begin{theorem}\label{thm-functions}
	Let $f:\mathbb{R}_+\to \mathbb{R}_+$ be a monotone increasing convex differentiable function such that $f(\mathbb{N})\subseteq \mathbb{N}$ and the following conditions:
	\begin{itemize}
		\item either $f(n)$ is always even or $f(n)$ is even iff $n$ is odd (or iff $n$ is even)
		\item $2f'(k+1)\leq f(0.7k)$ 
	\end{itemize}
	Then for every $2$-colouring of $\mathbb{N}$ the equation $x+y=f(z)$ has infinitely many monochromatic solutions.
\end{theorem}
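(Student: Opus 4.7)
My plan is to adapt the proof of Theorem~\ref{thm-char} to the functional setting, using $m(k) := f(k+1) - f(k)$ in place of $p(k+1) - p(k)$. By convexity $m(k) \leq f'(k+1)$, so the growth hypothesis $2f'(k+1) \leq f(0.7k)$ gives $2m(k) \leq f(0.7k)$, which is the substitute for the elementary fact $m(k)=o(p(k))$ used throughout the polynomial proof. First I would port Lemma~\ref{lem-monotone} verbatim: a non-monotone switch $k$ yields a monochromatic solution with $z \in \{k, k+1\}$, and distinct switches give distinct solutions; so we may assume that beyond some threshold every switch is monotone, and (by a long-monochromatic-interval argument combined with $f(z)\to\infty$) that switches occur infinitely often.

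Next, for each monotone switch $k$, I would define $\beta$ and $A_k \subseteq \mathbb{Z}_{m(k)}$ exactly as in Section~\ref{subsec-period} with $p$ replaced by $f$; the same estimate $\beta(j) + \beta(f(k) - j) > 2f(k)/3$ (else we already find a monochromatic solution with $z = k$) gives $|A_k| \geq m(k)/2$. If $A_k + A_k = \mathbb{Z}_{m(k)}$, repeating the Case~1 analysis of Section~\ref{subsec-proof-char} (using the growth bound to ensure the relevant intervals of $z$-values are long enough to contain admissible values) produces infinitely many monochromatic solutions from pairs or triples of consecutive switches of alternating colour. Otherwise, Lemma~\ref{lem-sumset-char} gives an even $m'(k) \mid m(k)$ and an odd $\alpha_k \in \mathbb{Z}_{m'(k)}$ with $A_k + A_k = \mathbb{Z}_{m(k)} \setminus (\alpha_k + m'(k) \mathbb{Z})$.

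The bad case is eliminated using condition~(1). If $m'(k)$ remains bounded along a subsequence of bad switches, I would pass to a further subsequence on which $m'(k)\equiv m'$ and $\alpha_k\equiv\alpha$ are constant, deduce the periodicity of $\phi$ modulo $m'$ exactly as in Case~2 of the proof of Theorem~\ref{thm-char}, and conclude that $f(z) \equiv \alpha \pmod{m'}$ for every $z \in \mathbb{N}$. But $m'$ is even and $\alpha$ is odd, so this would force $f(z)$ to be odd for every $z$: in case (a) of condition~(1) this contradicts $f$ always being even, while in cases (b) and (c) we have $f(1)$ and $f(2)$ of opposite parities, so these two values cannot both be $\equiv \alpha \pmod{m'}$ once $2 \mid m'$. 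Either way we obtain a contradiction.

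The main obstacle is the unbounded case $m'(k) \to \infty$, which in the polynomial setting was handled by Lemma~\ref{lem-cong} via finite differences and Szemer\'edi's theorem. For general $f$, the required analogue asserts that for any interval $I$ of some fixed length $N$ and all sufficiently large $m_1, m_2$ one can find $z \in I$ with $f(z) \not\equiv \alpha_i \pmod{m_i}$ for both $i$. My strategy would be to leverage the strict convexity and integrality of $f$: if $f(z) \equiv \alpha \pmod m$ held along an arithmetic progression of length $d+1$ inside $I$ (recovered from Szemer\'edi applied to a putative large counterexample set), then $m$ would have to divide all the associated first and higher differences, which by convexity are strictly increasing positive integers controlled by $f'$; the growth hypothesis (2) is precisely what pins down how large $m$ can be before this becomes impossible. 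Making this quantitative argument precise — and thereby identifying the exact sub-exponential class up to $f(z) \sim e^{(\log z)^2}$ alluded to in the statement — is where I expect the main technical work to lie.
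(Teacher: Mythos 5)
The paper does not actually prove Theorem~\ref{thm-functions} --- it is stated in the concluding remarks as a further application of the method --- so your proposal cannot be checked against a reference argument. Your overall plan, porting the proof of Theorem~\ref{thm-char} with $m(k)=f(k+1)-f(k)\le f'(k+1)$, is exactly what the authors intend, and Lemma~\ref{lem-monotone}, the definitions of $\beta$ and $A_k$, Claim~\ref{cl-Ak}, and Lemma~\ref{lem-sumset-char} (which is purely about $\Z_m$) all transfer verbatim. In the bounded-$m'$ branch there is a small but real slip: the paper's step ``$p(z)\equiv\alpha\pmod{m'}$ for every $z$'' relies on periodicity of $p$ modulo $m'$, which a general $f$ lacks. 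You should instead argue that either infinitely many $z$ have $f(z)\not\equiv\alpha\pmod{m'}$ (yielding infinitely many monochromatic solutions directly), or $f(z)\equiv\alpha\pmod{m'}$ holds for all \emph{sufficiently large} $z$, and then derive the parity contradiction with condition~(1) from two large consecutive integers, not from $f(1),f(2)$, which may lie outside the range where the congruence is forced.

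The genuine gap is the $m'(k)\to\infty$ branch. Lemma~\ref{lem-cong} rests on the identity~\eqref{eq-binomial}, which telescopes the $d$-th finite difference of $p$ along any $(d+1)$-term AP with gap $\ell$ to the \emph{constant} $\ell^d d!\,a_d$; this is exactly why the threshold $M$ in that lemma is an absolute constant, so ``$m'(k)\to\infty$'' eventually clears it. No such telescoping exists for a general convex $f$. Your convexity substitute --- if $f(z_1)\equiv f(z_2)\pmod m$ for $z_1<z_2\in I$ then $m\mid f(z_2)-f(z_1)>0$, so $m\le f(z_2)-f(z_1)$ --- is correct (and incidentally renders Szemer\'edi's theorem unnecessary here, since two points already suffice; also ``strictly increasing'' differences would require strict convexity, which is not assumed), but the threshold it produces is of order $|I|\cdot f'(\max I)$, which \emph{grows} with $k$ as $I$ moves out to scale $\approx ck$. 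Consequently the dichotomy ``$m'(k)$ bounded'' vs.\ ``$m'(k)\to\infty$'' no longer exhausts the useful cases: one can perfectly well have $m'(k)\to\infty$ yet $m'(k)\ll |I|\cdot f'(\max I)$ for every $k$ (say $m'(k)\approx m(k)^{1/2}$ with $f$ superpolynomial), and then neither branch of your argument applies. To close the proof you must either show that condition~(2) forces $m'(k)$ to be comparable to $f'$ at the relevant scale whenever a bad switch arises, or supply a third argument for this intermediate regime; you should also verify that condition~(2) is strong enough to make the preimage $\{z: 2m(k)\le f(z)\le (1-\eps)f(k)\}$ contain a long interval, the functional analogue of $m(k)=o(p(k))$, which your sketch uses but does not check. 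As written, the proposal does not close in this branch.
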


Note that every polynomial (satisfying the first necessary condition) satisfies these conditions (for large $z$). It would be interesting to determine the threshold, between $e^{(\log z)^{2}}$ and $2^z$, for the 2-Ramseyness of $x+y=f(z)$.


\begin{thebibliography}{99}
	
	
\bibitem{Ber}
V.~Bergelson,
\newblock {\em Ergodic Ramsey theory. In Logic and combinatorics (Arcata, Calif., 1985)},
\newblock Contemp. Math., 65, (1987), 63--87, Amer. Math. Soc., Providence, RI, 1987.

\bibitem{BL96}
V.~Bergelson, A.~Leibman,
\newblock {\em Polynomial extensions of van der Waerden’s and Szemer\'edi’s theorems},
\newblock J. Amer. Math. Soc., 9(3), (1996), 725--753.


\bibitem{CLP18+}
S.~Chow, S.~Lindqvist, S.~Prendiville,
\newblock  {\em Rado's criterion over squares and higher powers},
\newblock J. Euro. Math. Soc., to appear
	
\bibitem{CGS}
P.~Csikv\'ari, K.~Gyarmati, A.~S\'ark\"ozy,
\newblock {\em Density and Ramsey type results on algebraic equations with restricted solution sets},
\newblock Combinatorica, 32, (2012), 425--449.

\bibitem{NB18}
M.~Di Nasso, L.~Luperi Baglini,
\newblock {\em Ramsey properties of nonlinear Diophantine equations},
\newblock Advances in Mathematics, 324, (2018), 84--117.

\bibitem{Fur77}
H.~Furstenberg,
\newblock {\em Ergodic behavior of diagonal measures and a theorem of Szemer\'edi on arithmetic progressions},
\newblock J. d’Analyse Math., 31, (1977), 204--256.


\bibitem{GL}
B.~Green, S.~Lindqvist,
\newblock {\em Monochromatic solutions to $x + y = z^2$},
\newblock Canadian Journal of Mathematics, 71 (3), (2019), 579--605.



\bibitem{GS16}
B.~Green, T.~Sanders,
\newblock {\em Monochromatic sums and products},
\newblock Discrete Analysis, 5, (2016), 48pp.

\bibitem{KS}
A.~Khalafallah, E.~Szemer\'edi,
\newblock {\em On the Number of Monochromatic Solutions of $x + y = z^2$},
\newblock Combinatorics, Probability and Computing, 15, (2006), 213--227.


\bibitem{Kneser}
M.~Kneser,
\newblock {\em Absch\"atzungen der asymptotischen Dichte von Summenmengen},
\newblock  Math. Zeitschr., 58, (1958), 459--484.


\bibitem{Lin}
S.~Lindqvist,
\newblock {\em Partition regularity for generalised Fermat equations},
\newblock Combinatorica, to appear.


\bibitem{Mer}
F.~Mertens,
\newblock {\em Ein Beitrag zur analytischen Zahlentheorie},
\newblock J. reine angew. Math., 78, (1874), 199--245.


\bibitem{Mor17}
J.~Moreira,
\newblock {\em Monochromatic sums and products in $\N$},
\newblock Annals of Mathematics, (2) 185, (2017), 1069--1090.

\bibitem{Pach}
P.~P.~Pach,
\newblock {\em Monochromatic solutions to $x + y = z^2$ in the interval $[N,cN^4]$},
\newblock Bulletin of the London Mathematical Society, 50 (6), (2018), 1113--1116.


\bibitem{Pohoata}
C.~Pohoata,
\newblock {\em Boole's formula as a consequence of Lagrange's interpolation formula},
\newblock Integers, 8 (1), (2008), A23.


\bibitem{Rad33}
R.~Rado,
\newblock {\em Studien zur Kombinatorik},
\newblock Math. Z., 36, (1933), 424--470.


\bibitem{RZ98}
A.~Robertson, D.~Zeilberger,
\newblock {\em A 2-coloring of $[1, N]$ can have $(1/22)N^2+ O(N)$ monochromatic Schur triples, but not less!},
\newblock Electronic Journal of Combinatorics, 5 (1998), R19.

\bibitem{Sar78}
A.~S\'ark\"ozy,
\newblock {\em On difference sets of sequences of integers. I},
\newblock Acta Math. Acad. Sci. Hungar., 31, (1978), 125--149.

\bibitem{Sch99}
T.~Schoen,
\newblock {\em The number of monochromatic Schur triples},
\newblock Euro. J. Combinatorics, 20, (1999), 855--866.


\bibitem{Sch}
I.~Schur,
\newblock {\em \"Uber die Kongruenz $x^m+y^m\equiv z^m\pmod{p}$},
\newblock Jahresber. Dtsch. Math.-Ver., 14, (1916), 114--117.


\bibitem{Sze75}
E.~Szemer\'edi,
\newblock {\em On sets of integers containing no k elements in arithmetic progression},
\newblock Acta Arithmetica, 27, (1975), 199--245.


\bibitem{vdW}
B.L. van der Waerden,
\newblock {\em Beweis einer baudetschen vermutung},
\newblock Nieuw. Arch. Wisk., 15, (1927), 212--216.



	
	
	
\end{thebibliography}
\end{document}